\documentclass[a4paper, 11pt]{article}
\usepackage{stilNaN}

\title{Exotic symplectomorphisms of strong fillings}
\author{Du\v san Drobnjak\footnote{Faculty of Mathematics, University of Belgrade, Studentski trg 16, 11158 Belgrade, Serbia;\\ email: dusan.drobnjak@matf.bg.ac.rs}}

\begin{document}

\maketitle{}

\begin{abstract}
    We prove a criterion for detecting exotic symplectomorphisms of strong fillings of closed contact manifolds. The criterion concerns symplectomorphisms induced by free contact circle actions on the boundary, and it is expressed purely in topological terms. As an application, we show that one-point blow-ups of an infinite family of Brieskorn varieties admit exotic compactly supported symplectomorphisms.
\end{abstract}


\section{Introduction} 

Let $(W,\omega)$ be a symplectic manifold, and let $\Sympc(W,\om)$
denote the group of compactly supported symplectomorphisms of $W$.\ If $W$ has non-empty boundary, compactly supported symplectomorphisms are assumed to be equal to the identity near the boundary. The group $\smcg{W, \om}$ of path components of $\Sympc(W,\om)$ (a path connecting two elements of
$\Sympc(W,\omega)$ is a compactly supported symplectic isotopy) is called the symplectic mapping class group. A symplectomorphism representing a non-trivial mapping class is, in this paper, referred to as exotic.

In his famous paper \cite{gromov1985pseudo}, Gromov established, among many foundational results in symplectic topology, both the existence and non-existence of exotic symplectomorphisms on specific manifolds. For
instance, he showed that the symplectic mapping class groups of the
standard symplectic $4$-disc and of $\mathbb{C}P^2$ are trivial, while
that of the monotone $S^2\times S^2$ is isomorphic to $\mathbb{Z}_2$. Since then, non-trivial symplectic mapping classes have been detected in numerous symplectic manifolds, see e.g. \cite{evans2011stein, chiang2014open, seidel2014exotic, smirnov2023blowups}.

This work builds on the result of \cite{drobnjak2022exotic}, which dealt with exact (Liouville) fillings of closed contact manifolds. The aim of the present paper is to adapt the methods from \cite{drobnjak2022exotic} to strong (not necessarily exact) fillings of closed contact manifolds.

Given a loop of contactomorphisms $\{\fii_t\}_{t\in S^1}$ on the contact boundary, based at the identity, we construct an element of the symplectic mapping class group of the completion of $W$ by taking the time-one map of a suitable Hamiltonian isotopy near the boundary of $W$ and in the cylindrical end of the completion. We denote the symplectic mapping class of the resulting time-one map by $\Theta([\fii_t])$. This construction is due to Biran and Giroux \cite{biran2005symplectic} (see also \cite{uljarevic2017floer}), who considered it in the setting of Liouville domains. It extends directly to strong fillings (see Section~\ref{section:setup}). The main theorem of this paper (Theorem~\ref{theorem:mainIntroduction} below) says that if the loop $\fii_t$ is a free contact circle action that does not satisfy a certain topological condition, then the constructed symplectic mapping class is non-trivial. 

The mentioned topological condition, called topological symmetry, was introduced in \cite{drobnjak2022exotic} by Uljarevi\'c and the author (a special case had already appeared in \cite{uljarevic2017floer}). See Definition~\ref{definition:topSymmetry} for the version used in this paper. It is a property of a contact circle action expressed in terms of the relative homology of the symplectic manifold and a part of its contact boundary.

The precise formulation of the main result is given in the following theorem.

\begin{restatable}{theorem}{mainTheorem}\label{theorem:mainIntroduction}
Let $(W, \om, \lambda)$ be a compact weakly$^+$ monotone symplectic manifold satisfying $H^1(W) = 0$, which is a strong filling of a closed connected contact manifold $(\Sigma, \alpha)$.\ Suppose that $\Sigma$ admits a free contact circle action $\fii_t$ which is not topologically symmetric. Then the class $\Theta([\fii_t])$ is a non-trivial element of $\smcg{\widehat W, \om}$. In particular, if the contact Hamiltonian of $\fii_t$ is strict, then $\Theta([\fii_t])$ is represented by an exotic symplectomorphism of $W$.
\end{restatable}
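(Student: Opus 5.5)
We argue by contradiction, as in \cite{drobnjak2022exotic}, using Floer homology of the completion $\widehat W$ with Hamiltonians that are linear at infinity. Suppose $\Theta([\fii_t])$ is trivial in $\smcg{\widehat W,\om}$, and let $h$ be the contact Hamiltonian of the free circle action, normalised to be positive. A representative $\psi$ of $\Theta([\fii_t])$ acts on Floer homology by naturality:
\[
HF_*(H)\cong HF_*(\psi^*H),
\]
and $\psi^*H$ differs from $H$ at infinity by the slope shift generated by $h$. If $\psi$ is compactly supported isotopic to the identity, this yields isomorphisms $HF_*(k h)\cong HF_*((k+1)h)$ for the Hamiltonians of slope $k h$ on the cylindrical end, compatible with continuation maps up to a grading shift. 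The weakly$^+$ monotone hypothesis is what makes $HF$ well defined for a strong (non-exact) filling, via Hofer--Salamon/Seidel type compactness with bubbling controlled. The hypothesis $H^1(W)=0$ ensures that the grading, and the identification of the relevant loop of Hamiltonians, do not depend on choices of trivialisation or of lifts. Iterating, all groups $HF_*(k h)$ with $k\in\mathbb Z$ would be isomorphic up to shift, and in particular $HF_*(-\varepsilon h)\cong HF_*((1-\varepsilon)h)$. Here $HF_*(-\varepsilon h)\cong H_*(W,\partial W)$ (or $H^*(W)$, according to conventions).

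Next we compute $HF_*(H)$ for a slope $(1-\varepsilon)h$ or $(1+\delta)h$ just past the period. Because the action is free, all Reeb orbits of $h$ have period one, and they form a Morse--Bott family parametrised by $\Sigma$ itself. A Morse--Bott spectral sequence (or a perturbation by a Morse function on the orbit space $\Sigma/S^1$) expresses $HF_*(H)$ for slope slightly above $1$ in terms of $H_*(W)$ and $H_*(\Sigma)$, glued along a map induced by the action. The main obstacle is identifying this connecting map precisely with the topological datum in Definition~\ref{definition:topSymmetry}, namely the map in relative homology of $W$ relative to the portion of $\Sigma$ swept by the action. That identification requires computing the Conley--Zehnder index shift of the family, given by the Maslov index of the loop, together with the degree of the induced chain map. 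I would first redo the local computation of \cite{drobnjak2022exotic} in a collar of the boundary; that computation only uses the symplectization near $\Sigma$ and therefore transfers to strong fillings. The second task is to check that non-exactness of $\om$ introduces no additional low-energy Floer trajectories. I would control these using weak$^+$ monotonicity together with an action window argument: the window is made small by taking $\varepsilon$ small, so that only trajectories inside the collar contribute.

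Once this is established, the isomorphism forced by the triviality of $\Theta([\fii_t])$ says that the graded homology of $W$ agrees with its shifted version in a way that is precisely the topological symmetry condition. Since $\fii_t$ is assumed not to be topologically symmetric, this is a contradiction, so $\Theta([\fii_t])$ is non-trivial in $\smcg{\widehat W,\om}$.

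For the final claim, if the contact Hamiltonian is strict, then $\fii_t$ is generated by a Reeb flow of a contact form. The Biran--Giroux construction can then be performed inside a collar of $\partial W$, producing a fibered Dehn twist supported in $W$. Its class maps to $\Theta([\fii_t])$ under the natural map $\smcg{W,\om}\to\smcg{\widehat W,\om}$. Non-triviality of the image forces non-triviality in $\smcg{W,\om}$, so the representative is an exotic symplectomorphism of $W$.
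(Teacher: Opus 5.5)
Your outline has the right skeleton: argue by contradiction, use a Hamiltonian loop of slope $h$ whose naturality isomorphism shifts slopes, compare two Floer groups. However, the step that actually produces topological symmetry is missing, and the step you single out as the main obstacle is a detour. Since the action is free, the time-one map of $sh$ is $\varphi_s$, which has no fixed points for $s\in(0,1)\cup(-1,0)$. So Lemma~\ref{lemma:zigzag} gives $HF_*((1-\varepsilon)h)\cong HF_*(\varepsilon h)$ directly (in the paper's arrangement, $HF_*((\varepsilon-1)h)\cong HF_*(-\varepsilon h)$), and there is no Morse--Bott family to analyse. Naturality then yields $HF_*(\varepsilon h)\cong HF_{*+c}(-\varepsilon h)$. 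Both sides are $C^2$-small Morse computations, namely $H_{*+n}(W,\Sigma^+;\Lambda)$ and $H_{*+c+n}(W,\Sigma^-;\Lambda)$. Their $\Lambda$-dimensions equal the $\mathbb{Z}_2$-dimensions because $\Lambda$ has characteristic $2$, with gradings taken mod $2N_W$. The reflection $k\mapsto m-k$ in Definition~\ref{definition:topSymmetry} comes from Lefschetz duality $H_*(W,\Sigma^-;\mathbb{Z}_2)\cong H^{2n-*}(W,\Sigma^+;\mathbb{Z}_2)$, giving $m=2n-c$. Your sketch never supplies this step. An isomorphism of the homology ``with its shifted version'' gives periodicity, not symmetry, and the final identification is only asserted. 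Also, $HF_*(kh)$ is undefined for $k\in\mathbb{Z}$, since $\varphi_k=\mathrm{id}$.

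Second, you cannot normalise $h$ to be positive. Under $\alpha\mapsto g\alpha$ with $g>0$ the Hamiltonian becomes $gh$, so its sign is intrinsic, and a general free contact circle action has $h$ changing sign; this is why $\Sigma^+$ enters the definition. For the same reason, ``strict'' does not mean ``Reeb flow of a contact form''. The final claim only needs that a strict $h$ has conformal factor $1$, so that $\phi_1^H$ is supported in the interior of $W$. What you need instead is to make $h$ strict by averaging $\alpha$ over the action, transporting $W$ and $\Theta$ via Lemma~\ref{lemma:strictHamiltonian}. Strictness is what makes the slope of $\overline{F}\#(\varepsilon H)$ equal to $(\varepsilon-1)h$, and what makes $0$ a regular value of $h$, so that the Morse computations relative to $\Sigma^\pm$ hold.

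Finally, $H^1(W)=0$ is not about gradings or lifts. It makes the compactly supported symplectic isotopy from $\mathrm{id}$ to $\phi_1^H$ Hamiltonian, generated by some compactly supported $G$, so that $\phi_t^H\circ(\phi_t^G)^{-1}$ is a Hamiltonian loop with slope $h$. Conjugation by a compactly supported $\psi$, as you describe it, does not shift slopes. You must also treat the case where the orbits of this loop are non-contractible. There naturality is unavailable, but the contractible complex of $\overline{F}\#(\varepsilon H)$ is empty. This forces $HF_*(-\varepsilon h)=0$, hence $H_*(W,\Sigma^\pm;\mathbb{Z}_2)=0$, and symmetry holds trivially.
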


A related approach to positive contact loops and symplectic mapping classes is developed in \cite{cant2023extensible} for symplectically atoroidal fillings.

As an application of this theorem, we study one-point blow-ups of Brieskorn varieties. We show that, for an infinite family of Brieskorn varieties satisfying an arithmetic condition on the weights, the one-point blow-up admits an exotic compactly supported symplectomorphism (Theorem~\ref{theorem:example}). It demonstrates that the assumptions of Theorem~\ref{theorem:mainIntroduction} are satisfied in natural geometric settings.

The structure of the paper is as follows.\ In Sections~\ref{section:setup} and \ref{section:FloerTheory} we recall the geometric and Floer-theoretic framework used in the paper. In Section~\ref{section:mainTheorem} we prove the main theorem, as well as auxiliary lemmas. Finally, in Section~\ref{section:examples} we apply the main theorem to one-point blow-ups of Brieskorn varieties.

\medskip
\noindent\textbf{Acknowledgements.} I would like to thank Igor Uljarevi\'c for introducing me to this problem and for his guidance and continuous support throughout this work, which will be part of my PhD thesis under his supervision.\ I am grateful to Aleksandra Marinkovi\'c for suggesting blow-ups as a method for constructing examples. 

The author is partially supported by the Ministry of Science, Technological Development and Innovation, Republic of Serbia, through the project 451-03-33/2026-03/200104. 

\textbf{AI statement.} I used LLMs for proofreading, language editing and checking the references. I verified all of their suggestions and take full responsibility for the content of this paper.


\section{Geometric setup} \label{section:setup}

In this section, we recall the basic notions from symplectic and contact geometry that form the geometric setting of our work.\ We also fix the notation that is used throughout the paper. We first recall the definition of a strong filling.

\begin{definition}
    Let $(W^{2n}, \om)$ be a compact symplectic manifold with boundary $\partial W=\Sigma^{2n-1}$. Assume that there exists a 1-form $\lambda$ defined in a neighborhood $\mathcal U \subset W$ of $\Sigma$, such that $\om = \dif\lambda$ in $\mathcal U$ and such that $\alpha = \left.\lambda\right\vert_{\Sigma}$ is a contact form on $\Sigma$, with $\alpha \wedge (\dif\alpha)^{n-1} > 0$ with respect to the boundary orientation of $\Sigma$. We refer to the triple $(W, \om, \lambda)$ as a \emph{strong filling} of the contact manifold $(\Sigma, \xi = \ker\alpha)$.
\end{definition}

On the neighborhood $\mathcal U$, the Liouville vector field $Z$ is defined as the unique vector field that satisfies $i_{Z} \dif\lambda = \lambda$. The flow of $Z$ allows us to identify a neighborhood of the boundary (some subset of $\mathcal U$) with the collar $\Sigma \times (\eta, 1]$, for some $\eta \in (0, 1)$. We equip $\Sigma \times (\eta, 1]$ with coordinates $(x, r)$. Fix the embedding
\[
\iota \colon \Sigma \times (\eta, 1] \hookrightarrow W
\]
such that $\iota^*\lambda = r\alpha$. The completion $\big(\widehat{W}, \widehat{\omega}\big)$ of the strong filling $(W, \om, \lambda)$ is defined by 
\[
\widehat{W} := \left( \Sigma \times (\eta, +\infty) \right) \cup_{\iota} W,
\quad \text{with} \quad
\widehat{\omega} = 
\begin{cases}
\omega, & \text{in } W, \\
d(r\alpha), & \text{in } \Sigma \times (\eta, +\infty).
\end{cases}
\]
We identify the domain of the embedding $\iota$ with its image. In addition, with a slight abuse of notation, we write $\om$ for $\widehat{\om}$, and $\lambda$ for the 1-form on $\Sigma\times(\eta,+\infty)$ given by $r\alpha$. Informally, we refer to the set $\Sigma \times (r_0, +\infty) \subset \widehat{W}$, for large enough $r_0\geqslant \eta$, as the cylindrical end.

We require $W$ to be weakly$^+$ monotone, a condition that guarantees that Floer homology and the naturality isomorphisms (see Section~\ref{section:FloerTheory}) are well defined. First, we recall the definition of the minimal Chern number $N_W$. Denote by $c_1 = c_1(TW)$ the first Chern class of the tangent bundle $TW$ (with respect to any $\om$-compatible almost complex structure). 
The group $c_1(\pi_2(W))$ is a subgroup of $\bb{Z}$, so $c_1(\pi_2(W)) = N_W\bb{Z}$ for a unique integer $N_W\geqslant 0$, called the minimal Chern number of $W$.

\begin{definition} \label{definition:weakMono}
A symplectic manifold $(W^{2n}, \om)$ is called \emph{weakly$^+$ monotone} if one of the following conditions is satisfied:
\begin{enumerate}
    \item[(1)] For some $\tau\geqslant 0$, $(\om-\tau c_1)(\pi_2(W)) = 0$ (i.e., $(W, \om)$ is monotone).
    \item[(2)] The minimal Chern number $N_W$ satisfies either $N_W=0$ or $N_W\geqslant n-1$.
\end{enumerate}
\end{definition}

If $n-1$ is replaced by $n-2$ in part (2) of the definition above, one obtains the weak monotonicity used in \cite{hofer1995floer}, which is called semipositivity in \cite{mcduff2012j}. The continuation maps and the naturality isomorphisms in Section~\ref{section:FloerTheory} require the stronger condition above. This is condition (W$^+$) in, e.g., \cite{seidel1997pi1} and \cite{ritter2016circle}. Other conditions can be used to ensure that Floer theory is well defined (see, for example, the atoroidal setup in \cite{cant2023extensible}).

Now we recall the basics about contact dynamics on $(\Sigma, \xi)$, and its relation to the Hamiltonian dynamics on $(\widehat W, \om)$. A contactomorphism $\phi\colon \Sigma\to\Sigma$ is a diffeomorphism that preserves the contact structure, i.e.\ $\phi_* \xi = \xi$.\ Given a loop of contactomorphisms $\{\fii_t\}_{t\in S^1}$ based at the identity ($\fii_0=\id$), we define the associated contact vector field $Y_t$ by $Y_t = \big(\frac{\dif}{\dif t}\fii_t \big) \circ \fii_t^{-1}$.\ The contact Hamiltonian $h\colon S^1\times \Sigma \to\bb{R}$ is defined by the relation $h_t(x) = -\alpha(Y_t(x))$. In general, the isotopy $\fii_t$ does not preserve the contact form $\alpha$, but rather satisfies $\fii_t^* \alpha = \kappa^h_t \alpha$, for a positive function $\kappa^h_t\colon \Sigma \to (0, +\infty)$ known as the conformal factor. If $\kappa^h_t\equiv 1$, we say that the contact Hamiltonian $h_t$ is strict. 

In this paper, we are particularly interested in loops of contactomorphisms which are contact circle actions.

\begin{definition}
A \emph{contact circle action} $\fii_t$ is a loop of contactomorphisms satisfying
$$
\fii_t\circ\fii_s=\fii_{t+s},
\quad t,s\in S^1=\mathbb R/\mathbb Z.
$$
\end{definition}

Note that the vector field $Y$ that generates a contact circle action is always autonomous, and so is its contact Hamiltonian $h=-\alpha(Y)$. If the above $S^1$-action is free, every orbit of $Y$ is periodic with minimal period 1.

Now we explain the definition of the map $\Theta$, which appears in Theorem~\ref{theorem:mainIntroduction}. Denote by $\Omega(\Cont(\Sigma), \id)$ the loop space of the contactomorphism group $\Cont(\Sigma)=\Cont(\Sigma, \xi)$, based at the identity map. Start with the loop $\fii_t \in \Omega(\Cont(\Sigma), \id)$, $t\in S^1$, $\fii_0 = \fii_1 = \id$. Let $h_t\colon \Sigma \to \bb{R}$ be the contact Hamiltonian that generates the loop $\fii_t$. Choose a symplectic Hamiltonian $H_t$ such that 
$$H\colon S^1\times \widehat{W} \to \mathbb{R}, \ H_t(x,r) = r\cdot h_t(x),\  \text{for}\ (x,r)\in \Sigma\times [1-\eps, +\infty),$$
for some small $\eps>0$. Such a Hamiltonian always exists: for instance, take $H_t(x,r) = \chi(r)\,r\,h_t(x)$ for $r>\eta$, extended by zero to $\widehat{W}\setminus\left(\Sigma \times (\eta, +\infty)\right)$, where $\chi\colon (\eta, +\infty) \to [0,1]$ is a smooth function such that $\chi(r) = 0$ for $r$ near $\eta$, and $\chi(r)=1$ for $r\geqslant 1-\eps$. Let $\phi_t^H\colon \widehat{W}\to\widehat{W}$ denote the Hamiltonian flow of the Hamiltonian vector field $X_t^H$. We use the convention $\dif H_t = i_{X_t^H}\om$. Set $K = \max\limits_{S^1 \times \Sigma} \kappa_t^h(x) \geqslant 1$. Then, this flow satisfies $$\phi_t^H(x,r) = \left(\fii_t(x), \frac{r}{\kappa^h_t(x)} \right), \quad \text{for}\, (x,r)\in \Sigma \times [(1-\eps)K,+\infty).$$
Define the map \begin{equation} \label{eq:ThetaExtension}
    \Theta\colon \pi_0\Omega(\Cont(\Sigma), \id) = \pi_1\Cont(\Sigma) \to \pi_0\Sympc (\widehat W, \om) \quad \text{by} \quad [\fii_t] \mapsto [\phi_1^H].
\end{equation}
We call $\Theta$ the connecting map (see the remark following Lemma~\ref{lemma:mapTheta}).

\begin{remark} 
If $h_t$ is strict, then $K=1$, so $\phi_1^H$ is compactly supported in the interior of $W$. In this case, $\Theta([\fii_t])$ lies in the image of the map $\smcg{W,\om}\rightarrow \smcg{\widehat{W},\om}$ induced by the inclusion $W\hookrightarrow\widehat{W}$. 
\end{remark}

\begin{lemma} \label{lemma:mapTheta}
The map $\Theta$ is well defined.
\end{lemma}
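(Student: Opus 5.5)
We will check three things. First, $\phi_1^H$ lies in $\Sympc(\widehat W,\om)$. Second, its class does not depend on the choice of $H$. Third, it depends only on the homotopy class of the loop $\fii_t$ in $\Omega(\Cont(\Sigma),\id)$.

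\emph{Compact support.} On $\Sigma\times[(1-\eps)K,+\infty)$ we have $\phi_1^H(x,r)=(\fii_1(x),r/\kappa_1^h(x))$. Since $\fii_1=\id$ and $\fii_1^*\alpha=\kappa_1^h\alpha$, we get $\kappa_1^h\equiv1$, so $\phi_1^H$ is the identity there. Thus $\phi_1^H$ is a compactly supported Hamiltonian diffeomorphism, hence a symplectomorphism. The flow is complete because, on the end, it is given explicitly by the formula above.

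\emph{Independence of $H$.} Let $H^0,H^1$ be two admissible choices, each equal to $r h_t(x)$ for $r\geqslant 1-\eps$ (possibly with different $\eps$; shrink to the smaller one). Set $H^s=(1-s)H^0+sH^1$. Each $H^s$ is again admissible with the same contact Hamiltonian $h_t$, since the two functions agree on the collar. Hence $\phi_1^{H^s}$ equals $\phi_1^{H^0}$ outside the fixed compact set $W\cup\Sigma\times[1-\eps,(1-\eps)K]$, by the formula above. So $s\mapsto\phi_1^{H^s}$ is a path in $\Sympc(\widehat W,\om)$ with uniformly compact support. Smoothness in $s$ follows from smooth dependence of flows on parameters.

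\emph{Homotopy invariance.} This is the main point. Let $\fii^s_t$, $s\in[0,1]$, be a smooth homotopy of loops based at the identity, with contact Hamiltonians $h^s_t$ depending smoothly on $s$. Take $H^s_t=\chi(r)\,r\,h^s_t(x)$ with a fixed cutoff $\chi$. Let $K'=\max_{s,t,x}\kappa^{h^s}_t(x)$, which is finite by compactness of $[0,1]\times S^1\times\Sigma$. Then for all $s$, on $r\geqslant(1-\eps)K'$,
$$\phi_1^{H^s}(x,r)=\bigl(\fii^s_1(x),r/\kappa^{h^s}_1(x)\bigr)=(x,r).$$
Hence $s\mapsto\phi_1^{H^s}$ is a compactly supported symplectic isotopy with supports in a fixed compact set, joining the two representatives. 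To justify the formula on the end, note that $r h_t$ is the $1$-homogeneous Hamiltonian lifting the contact isotopy. Its flow is $(x,r)\mapsto(\fii_t(x),r/\kappa_t(x))$, the standard symplectization lift, which preserves $r\alpha$. The flow starting at $r\geqslant(1-\eps)K'$ stays in the region $r\geqslant 1-\eps$, where $H$ is homogeneous, since $r/\kappa\geqslant r/K'$. The main obstacle is exactly this uniform control of the support: a homotopy of loops may have unbounded conformal factors only if compactness fails, which it does not here. Smoothing a merely continuous homotopy to a smooth one in the $C^\infty$ topology is routine.

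Together these three steps show that $[\phi_1^H]\in\pi_0\Sympc(\widehat W,\om)$ depends only on $[\fii_t]\in\pi_1\Cont(\Sigma)$, so $\Theta$ is well defined.
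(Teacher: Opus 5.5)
Your proposal is correct and follows the same three steps as the paper: $\phi_1^H$ is the identity on the end because $\kappa_1^h\equiv 1$, the class is independent of the extension of $r h_t$, and homotopy invariance follows from the family $H^s_t=\chi(r)\,r\,h^s_t$ with a fixed cutoff. The differences are minor: for independence of the extension you interpolate linearly between the two Hamiltonians, where the paper uses the isotopy $t\mapsto\phi_t^H\circ(\phi_t^F)^{-1}\circ\phi_1^F$, and your uniform bound $K'$ on the conformal factors makes explicit the uniform compact support that the paper leaves implicit.
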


\begin{proof}
    Note that the time-one map $\phi_1^H$ acts as the identity on $\Sigma \times [(1-\eps)K, +\infty)$, since $\fii_1=\id$ and $\kappa_1^h \equiv 1$, and therefore $\phi_1^H$ is an element of $\Sympc(\widehat W, \om)$. 
    
    First, we prove that the isotopy class of $\phi_1^H$ does not depend on the way we extend the Hamiltonian. Let $H$ and $F$ be two extensions of the same contact Hamiltonian $h$. We can take $t\mapsto \phi_t^H\circ \big(\phi_t^F\big)^{-1}\circ \phi_1^F$, $t\in [0,1]$, to be a compactly supported symplectic isotopy between the time-one maps $\phi_1^H$ and $\phi_1^F$.

    Now we prove that elements of the same class in $\pi_1\Cont(\Sigma)$ map to the same class in $\pi_0\Sympc(\widehat W, \om)$. Let $\chi$ be as above. Let $\fii_t^1$ and $\fii_t^2$ be loops of contactomorphisms in the same class in $\pi_1\Cont(\Sigma)$. Let $\fii_t^s$, for $s\in [1,2]$, be a smooth homotopy of based loops between them. For fixed $s$, each loop $\fii_t^s$ is generated by the contact Hamiltonian $h_t^s$, which we can extend to the symplectic Hamiltonian $H_t^s(x,r)$ equal to $\chi(r)rh_t^s(x)$ for $r>\eta$ (and zero elsewhere). The path $\phi_1^{H^s}$ of time-one maps of $H_t^s$ connects the maps $\phi_1^{H^1}$ and $\phi_1^{H^2}$ smoothly through compactly supported symplectomorphisms. 
\end{proof}

\begin{remark}
    In the exact case, the map $\Theta$ can be identified with the connecting map of a certain Serre fibration, which we now explain. If $W$ is a Liouville domain with a Liouville form $\lambda$ and a completion $\widehat W$, denote by $\Symp(\widehat W,\lambda)$ the group of all exact symplectomorphisms of $\widehat{W}$ (symplectomorphisms $\phi$ of $\widehat{W}$ such that $\phi^*\lambda - \lambda$ is exact and compactly supported). The result by Biran and Giroux \cite{biran2005symplectic} (see also \cite{uljarevic2017floer}) says that $\Symp(\widehat W,\lambda)$ is the total space of the Serre fibration with base space $\Cont(\Sigma)$ and fiber over the identity homotopy equivalent to $\Sympc(\widehat W, \om)$. Then the map $\Theta$ from Lemma~\ref{lemma:mapTheta} can be seen as the connecting map $\Delta$ between $\pi_1$ and $\pi_0$ in the long exact sequence of homotopy groups below
    $$
    \ldots \rightarrow \pi_1\Symp(\widehat W,\lambda) \rightarrow \pi_1\Cont(\Sigma) \xrightarrow{\Delta} \pi_0\Sympc(\widehat W, \om) \rightarrow \ldots
    $$
\end{remark}
\vspace{-3mm}

We conclude this section by introducing the notion of topological symmetry. Earlier versions of this concept appeared in \cite{uljarevic2017floer} and \cite{drobnjak2022exotic}.

\begin{definition} \label{definition:topSymmetry}
    Let $\fii_t$ be a free contact circle action on a contact manifold $\Sigma$ with a strong symplectic filling $W$. Let $h$ be the contact Hamiltonian that generates $\fii_t$, and let $N_W$ be the minimal Chern number of $W$. Denote the subset $\Sigma^+$ of the boundary $\Sigma$ as $\Sigma^+=\{x\in\Sigma \mid h(x) > 0\}$. For $k \in \bb{Z}_{2N_W}$ (where $\bb Z_0 = \bb Z$), with a slight abuse of notation, we denote 
    $$\dim H_{k}(W, \Sigma^+; \bb{Z}_2) = \sum
    \dim H_l(W, \Sigma^+; \bb{Z}_2),$$ where the sum goes over all integers $l$ such that $l-k$ is an integer multiple of $2N_W$ (for $N_W>0$, we can interpret it as the cumulative dimension of homology, wrapped modulo $2N_W$).
    
    We say that the action $\fii_t$ is topologically symmetric (with respect to $W$) if there exists $m\in\bb{Z}_{2N_W}$ such that for all $k\in\bb{Z}_{2N_W}$ we have 
    \begin{equation*} 
        \dim H_{k}(W, \Sigma^+; \bb{Z}_2) = \dim H_{m-k}(W, \Sigma^+; \bb{Z}_2).
    \end{equation*}
\end{definition}

Note that the set $\Sigma^+$ defined above does not depend on the choice of the contact form defining the given contact distribution.


\section{Floer theory} \label{section:FloerTheory}

In this section we recall the definition of Floer homology for contact Hamiltonians, which is the main tool that is used in this paper. The well-definedness of this version of Floer theory is guaranteed by the maximum principle developed in \cite{merry2019maximum}. The case of strict contact Hamiltonians, which is required in this paper, was previously considered in \cite{ritter2016circle}.

Let $h_t\colon \Sigma \to \bb{R}$ be a (time-dependent) contact Hamiltonian, parametrized by $t\in S^1$.\ We describe the construction of the Floer homology $HF_*(h)$ associated with a contact Hamiltonian $h_t$.\ This is, in fact, the Hamiltonian Floer homology $HF_*(H)$ associated with a symplectic Hamiltonian $H_t\colon \widehat{W} \to\bb{R}$ which is equal to 
\begin{equation} \label{eq:Hamiltonian}
H_t(x,r) = r\cdot h_t(x), \text{ for } (x, r)\in \Sigma \times [1, +\infty).
\end{equation}
Throughout, $h_t$ is assumed to have no 1-periodic orbits. The function $h_t$ is referred to as the slope of $H_t$, and up to canonical isomorphism, $HF_*(H)$ depends only on the slope \cite[Section~4]{merry2019maximum}.

We now describe the construction of the Floer homology groups for a symplectic Hamiltonian $H_t$ satisfying \eqref{eq:Hamiltonian}. Consider an $S^1$-family of $\omega$-compatible almost complex structures $J_t$, that is, $g_t(p,q) = \om(p,J_t q)$ defines an $S^1$-family of Riemannian metrics on $\widehat{W}$. In addition, $J_t$ is required to be compatible with $\lambda$ on $\Sigma \times[1,+\infty)$ in the sense that $\dif r (J_t p) = -\lambda(p)$ on $\Sigma \times [1,+\infty)$. The usual non-degeneracy condition is assumed: for every fixed point $x$ of the time-one map $\phi_1^H$, $\det(\dif \phi_1^H (x) - \id) \neq 0$.\ Finally, the linearized operator associated with the Floer equation 
\begin{equation} \label{eq:FloerEquation}
    u\colon \bb{R}\times S^1 \to \widehat{W}, \:\: \partial_s u + J_t(u)(\partial_t u - X_t^H(u)) = 0
\end{equation}
is assumed to be surjective (this holds for a generic choice of $J_t$, see \cite{hofer1995floer}).

Denote by $\mathcal L$ the set of all contractible loops in $\widehat{W}$ and by $\mathcal P (H)$ the set of all contractible 1-periodic solutions of the equation $\frac{\dif}{\dif t} x(t) = X_t^H(x(t))$. Consider all the pairs $(x,u)$ such that $x\in \mathcal L$ and $u\colon D^2 \to \widehat{W}$ is a smooth disc (called a capping disc) that bounds the loop $x$, $x(t)=u(e^{2\pi i t})$. On the set of all those pairs we impose the relation $(x,u_1) \sim (x,u_2)$ if the integrals of $\om$ and $c_1$ over the sphere $u_1\#\overline{u_2}\colon S^2\to \widehat{W}$ vanish.
Denote the resulting quotient space by $\widetilde{\mathcal L}$ and its elements by $[x,u]$. Let $\widetilde{\mathcal P}(H)$ be the set of those elements $[x,u]\in\widetilde{\mathcal L}$ such that $x\in \mathcal P (H)$. The symplectic action functional is defined on the set $\widetilde{\mathcal L}$ by
$$\mathcal A_H([x,u]) = - \int\limits_{D^2} u^* \om - \int\limits_0^1 H_t(x(t)) \, \dif t$$ and its critical points are precisely those in $\widetilde{\mathcal P}(H)$. To each element $[x,u]\in \widetilde{\mathcal P}(H)$, we can assign a Conley--Zehnder index $\mu_{\mathrm{CZ}}([x,u])$ by calculating the index of the path of symplectic matrices along the orbit $x$ obtained by linearizing the Hamiltonian equation over a symplectic trivialization of $u^*T\widehat W$. We grade elements of $\widetilde{\mathcal P}(H)$ by the negative Conley--Zehnder index, $\deg([x,u]) = -\mu_{\mathrm{CZ}}([x,u])$.

The $k$-th Floer chain group $CF_k(H)$ is the set of formal sums 
$$\zeta = \sum\limits_{[x,u] \in \widetilde{\mathcal P}(H)} \zeta([x,u]) \langle [x,u] \rangle,\ \zeta([x,u]) \in \mathbb{Z}_2, \ \deg([x,u]) = k$$
such that for all $c\in\mathbb{R}$ the set $\{[x,u] \in \widetilde{\mathcal P}(H) \mid \zeta([x,u])\neq 0, \, \mathcal A_H([x,u]) \geqslant c\}$ is finite. The boundary operator $\partial_k\colon CF_k(H) \to CF_{k-1}(H)$ is given by $$\partial_k\langle[x,u]\rangle = \sum\limits_{\deg([x,u]) - \deg([y,v]) = 1} n([x,u], [y,v]) \langle [y,v] \rangle,$$ where $n([x,u], [y,v])$ represents the number, calculated modulo 2, of Floer trajectories (solutions of \eqref{eq:FloerEquation}) from $[x,u]$ to $[y,v]$, up to $\bb{R}$-translation. The maximum principle \cite{merry2019maximum} ensures the compactness of moduli spaces, and the weak$^+$ monotonicity of $W$ (see Definition~\ref{definition:weakMono}) then guarantees that $\partial^2 = 0$ \cite[Theorem 5.1]{hofer1995floer}. 

Capping discs can be disregarded if we take the grading modulo $2N_W$. The Floer chain complex can be identified with $$CF_k(H) = \bigoplus\limits \Lambda\langle x\rangle,$$ where the sum goes over all orbits $x \in \mathcal P(H)$ such that $\deg(x)-k$ is an integer multiple of $2N_W$. This makes $CF_k(H)$ a finite-dimensional vector space over $\Lambda$. Floer homology, written as $HF(H)$, is defined as the homology of the complex $(CF(H), \partial)$. Here, $\Lambda$ is the Novikov field, defined in the following way. 

Take the homomorphisms $I_\om, I_{c_1} \colon \pi_2(W) \to \mathbb{R}$ which integrate $\om$ and $c_1$ over the sphere, respectively. Denote $\Gamma_0 = \frac{\ker I_{c_1}}{\ker I_\om \cap \ker I_{c_1}}.$ Let $T$ be a formal variable, and define $\Lambda$ to be the set of all formal sums $$\nu = \sum\limits_{A\in \Gamma_0} \nu(A) T^A, \ \nu(A)\in\mathbb{Z}_2,$$ such that $\{A\in \Gamma_0 \mid \nu(A)\neq 0, I_\om(A) < c\}$ is a finite set for every $c\in \mathbb{R}$. It is well known that $\Lambda$ is a field of characteristic 2.

\begin{remark}
    Notationwise, for any $\bb{Z}$-graded homology (Morse or singular) and any $k\in \bb{Z}_{2N_W}$ (where $\bb{Z}_0=\bb{Z}$), we denote $H_k(-) = \bigoplus H_l(-)$ and $\dim H_k(-) = \sum \dim H_l(-)$, where the sums go over all integers $l$ such that $l-k$ is an integer multiple of $2N_W$.
\end{remark}

We now describe the naturality isomorphism, which is an isomorphism of the Floer homology groups associated with a symplectic Hamiltonian $F_t\colon \widehat{W}\to \bb{R}$ generating a loop of Hamiltonian diffeomorphisms $\psi^F$ whose orbits are contractible. Let $H_t$ be a Hamiltonian function on $\widehat{W}$, that will serve as the argument of the isomorphism map, and let $\overline{F}\# H$ be the Hamiltonian that generates the isotopy $\big( \psi_t^F\big)^{-1}\circ \phi_t^H$. In addition to $F_t$, the construction requires a bijection $b\colon \widetilde{\mathcal P}(H) \rightarrow \widetilde{\mathcal P}(\overline{F}\# H)$ such that it is the restriction of a lift $\widetilde{\mathcal L} \rightarrow \widetilde{\mathcal L}$ of the map ${\mathcal L} \rightarrow {\mathcal L}$ given by
$$ x \mapsto \Big(t \mapsto \big(\psi^F_t\big)^{-1} (x(t))\Big).$$

Since the orbits of $\psi^F$ are contractibile, the map above preserves $\mathcal L$, so that it lifts to $\widetilde{\mathcal L}$ (see the proof of \cite[Lemma~2.4]{seidel1997pi1}). This assumption is automatic in the closed case, where the orbits of a Hamiltonian loop are contractible as a consequence of the Arnold conjecture \cite[Lemma~2.2]{seidel1997pi1}, and in the exact \cite[Section~2.7]{uljarevic2017floer} setting it can be dropped, since there one can work with all free homotopy classes of orbits at once.

We define the naturality isomorphism $\mathcal N (F)\colon CF(H)\to CF(\overline{F}\# H)$ that is on generators $[x,v]$ given by $[x,v] \mapsto b([x,v])$. The induced map on the homology will be denoted by the same notation. These maps do not preserve the grading in general, but they do up to a uniform shift $c\in \mathbb{Z}_{2N_W}$ (see \cite{seidel1997pi1} for more details).

Denote the slopes of the Hamiltonians $H_t$ and $F_t$ by $h_t$ and $f_t$, respectively, and assume that $f_t$ is strict (this is the only case needed in this paper). The slope of the Hamiltonian $\overline{F}\# H$ is given by $(h_t-f_t)\circ \fii_t^f,$ where $\fii_t^f$ is the contact isotopy generated by $f_t$. Now, the naturality isomorphism in terms of the contact Hamiltonians has the following form
$$\mathcal N(F) \colon HF_{*}(h) \to HF_{*+c}\big((h-f)\circ \fii^f\big), $$ where $c\in \mathbb{Z}_{2N_W}$ is the uniform shift in grading.


\section{Proof of the main theorem} \label{section:mainTheorem}

In this section we prove the main theorem (stated in the Introduction as Theorem~\ref{theorem:mainIntroduction}).\ Preceding that proof, we formulate and prove a series of lemmas.

\begin{lemma} \label{lemma:zigzag}
    For $s\in[0,1]$, let $h^s_t$ be a smooth path of contact Hamiltonians on $\Sigma$ such that no $h^s$ has 1-periodic orbits. Then $HF_*(h^0) \cong HF_*(h^1)$. 
\end{lemma}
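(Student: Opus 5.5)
Because $[0,1]$ is compact and the property of having no $1$-periodic orbits is open, the plan is a compactness argument: cover $[0,1]$ by finitely many short intervals, and on each interval build an isomorphism from Floer continuation maps (from \cite{merry2019maximum}, as for strict slopes in \cite{ritter2016circle}). Composing the finitely many isomorphisms, possibly through a zig-zag of continuation maps, gives $HF_*(h^0)\cong HF_*(h^1)$.

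\textbf{Step 1: local constancy.} Fix $s_0\in[0,1]$. Since $h^{s_0}$ has no $1$-periodic orbits, there is an open neighbourhood $U_{s_0}$ such that every $h^{s}$ with $s\in U_{s_0}$ has none either. For $s_1\in U_{s_0}$, consider the linear interpolation $g^\sigma=(1-\sigma)h^{s_0}+\sigma h^{s_1}$. If $U_{s_0}$ is small, every $g^\sigma$ is $C^1$-close to $h^{s_0}$. By the same openness, shrinking $U_{s_0}$ if needed, no $g^\sigma$ has $1$-periodic orbits.

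Next choose symplectic Hamiltonians $H^{s_0}$ and $H^{s_1}$ with slopes $h^{s_0}$ and $h^{s_1}$. Consider an $s$-dependent homotopy $H_{s,t}$ that equals $r\,g^{\beta(s)}_t(x)$ on the cylindrical end, where $\beta$ is a monotone cutoff. This gives a continuation map $HF_*(h^{s_0})\to HF_*(h^{s_1})$, and the reverse homotopy gives a map back. The composites are homotopic, through homotopies of homotopies with slopes in the same family, to constant homotopies. Constant homotopies induce the identity. Hence both continuation maps are isomorphisms. Weak$^+$ monotonicity is used here, as for the differential, to rule out bubbling in the $0$- and $1$-dimensional moduli spaces of continuation and homotopy-of-homotopy solutions.

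\textbf{Step 2: the main obstacle, compactness without monotone slopes.} The standard maximum principle for continuation maps needs the slope to be non-increasing in $s$ on the end. Here the slopes $h^{s_0}$ and $h^{s_1}$ are arbitrary close functions, so this fails in general. I would handle this with the non-monotone maximum principle of \cite{merry2019maximum}. It gives $C^0$-bounds for Floer solutions of $s$-dependent Hamiltonians of the form $r\,h_{s,t}$ on the end, provided no $h_{s,t}$ has $1$-periodic orbits. This is exactly the hypothesis secured in Step 1, and it explains why the argument must stay inside the set of orbit-free slopes.

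If one prefers to avoid that result, there is an alternative. One can always pass through a third slope $h^{s_0}-\delta$ with $\delta>0$ small that lies below both $h^{s_0}$ and $h^{s_1}$ and still has no periodic orbits. The two monotone continuation maps from this slope are isomorphisms, because the monotone homotopies stay in the orbit-free region and the action window can be controlled. This yields a zig-zag $HF_*(h^{s_0})\leftarrow HF_*(h^{s_0}-\delta)\rightarrow HF_*(h^{s_1})$ of isomorphisms, which is presumably the origin of the lemma's label.

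\textbf{Step 3: finite cover.} Compactness of $[0,1]$ gives finitely many points $0=s_0<s_1<\dots<s_m=1$ with consecutive points in a common neighbourhood $U$ from Step 1. Chaining the isomorphisms from Steps 1 and 2 gives $HF_*(h^0)\cong HF_*(h^1)$. The grading is preserved throughout, since continuation maps have degree zero.
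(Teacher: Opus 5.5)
\textbf{Genuine gap: compactness for non-monotone homotopies.} Your Step 2 relies on a result that does not exist. \cite{merry2019maximum} proves a maximum principle for $s$-dependent data of the form $r\,h_{s,t}(x)$ only under a sign condition on $\partial_s h_{s,t}$. That condition is what yields continuation maps in one direction only. No maximum principle can use the hypothesis that $h^s$ has no $1$-periodic orbits. A maximum principle is a pointwise argument at an interior maximum of $r\circ u$, while the absence of periodic orbits is a global dynamical condition.

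For a non-monotone homotopy, the energy identity contains the term $-\int \partial_s H_{s,t}(u)\,ds\,dt$. On the end this term equals $-\int r(u)\,\partial_s h_{s,t}$, so it grows linearly in $\max r\circ u$. Hence there is a priori neither an energy bound nor a $C^0$ bound. The continuation maps and homotopies-of-homotopies in your Step 1 are therefore not defined.

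This compactness is the whole content of the lemma, and it is what the paper imports from \cite[Theorem~1.1]{uljarevic2022hamiltonian}. In that proof, the Floer data is changed only in the cylindrical end, outside a compact region that contains all Floer solutions. The complexes are thus identified, not compared through non-monotone continuation maps. This is also why the argument carries over unchanged to weakly$^+$ monotone strong fillings.

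\textbf{The fallback zig-zag relocates the same gap.} The route $HF_*(h^{s_0})\leftarrow HF_*(h^{s_0}-\delta)\to HF_*(h^{s_1})$ is a sensible reduction to monotone continuation maps, which do exist. But the phrase ``the action window can be controlled'' does not show that these maps are isomorphisms.

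The natural argument writes the continuation map as the identity plus strictly action-decreasing terms. For that you need Hamiltonians with slopes $h^{s_0}-\delta$ and $h^{s_0}$ that agree on a region containing all $1$-periodic orbits, with the homotopy supported where there are none. For non-strict, time-dependent slopes, this forces an interpolation in the end of the form $r\,k_t(x,r)$ with $k$ genuinely depending on $r$. The maximum principle of \cite{merry2019maximum} does not cover such Hamiltonians, so you again need an independent argument confining Floer solutions to a compact region.

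With such a confinement result, either of your routes would go through. Without it, the proposal assumes the statement it is meant to prove.
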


\begin{proof}
    For exact fillings, this is \cite[Theorem~1.1]{uljarevic2022hamiltonian}. In the proof of that theorem, the Floer data that changes is in the cylindrical end, outside a compact region that contains all Floer solutions. Hence the proof can be used for strong fillings on which Floer homology is well defined, in particular to weakly$^+$ monotone ones (see \cite[Remark~1.6]{uljarevic2022hamiltonian}).
\end{proof}

In the following lemma, we formulate a standard result about the isomorphism between Floer and Morse homology. For a Morse function $H$ on $\widehat W$, we denote by $HM_*(H;\Lambda)$ the Morse homology with $\Lambda$ coefficients defined by the flow lines of the gradient vector field $\nabla H=JX^H$. In this convention, a critical point $p$ of $H$ has degree $2n-\text{ind}_H(p)$, where $\text{ind}_H(p)$ is the Morse index of $p$.

\begin{lemma} \label{lemma:FloerMorse}
    Denote by $\Lambda$ the Novikov field over $\bb{Z}_2$. Let $W^{2n}$ be a weakly$^+$ monotone strong filling of a contact manifold $\Sigma$, and let $H\colon \widehat{W} \to \bb{R}$ be a Morse function such that $H(x,r) = rh(x)$, for $(x,r)\in \Sigma\times [1, +\infty)$, where $h\colon \Sigma \to \bb{R}$ is a contact Hamiltonian that has no constant periodic orbits. Then for sufficiently small $\eps > 0$, the groups $HF_{k}(\eps h)$ and $HM_{k+n}(\eps H; \Lambda)$ are isomorphic, for every $k\in\bb{Z}_{2N_W}$.
\end{lemma}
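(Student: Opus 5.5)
The plan is to reduce to the classical PSS/Floer-type comparison: for a $C^2$-small autonomous Morse Hamiltonian on a closed or compact region, Floer homology coincides with Morse homology with Novikov coefficients, up to the grading shift $n$ coming from the convention $\deg = -\mu_{\mathrm{CZ}}$.

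\textbf{Step 1: orbits.} First I check that for small $\eps>0$ all $1$-periodic orbits of $\eps H$ are constant and sit at critical points of $H$ in the interior region $W\cup (\Sigma\times[1,r_1])$. On the cylindrical end $\Sigma\times[1,+\infty)$, the Hamiltonian vector field of $\eps r h(x)$ projects to the contact vector field of $\eps h$ on $\Sigma$. Since $h$ has no constant orbits, $Y^h$ is nowhere zero on the compact $\Sigma$, so it has a positive minimal period $T_0$ among its periodic orbits. Hence $\eps h$ has no $1$-periodic orbits once $\eps<T_0$. This also shows that $\eps h$ is an admissible slope and that $HF_*(\eps h)$ is defined.

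On the compact region $\{r\le 1\}$, I will use the standard fact that for $\eps$ small (a $C^2$ bound on $\eps H$ there) the only $1$-periodic orbits are the critical points. These are nondegenerate because $H$ is Morse. For the constant orbit at a critical point $p$ with the constant capping disc, the Conley--Zehnder index is $\mu_{\mathrm{CZ}}(p)=n-\operatorname{ind}_H(p)$ up to the sign convention. With $X^H$ defined by $\dif H=i_X\om$ and the path $\exp(tJ\,\eps\,\mathrm{Hess})$, the degree is $\deg = \operatorname{ind}_H(p)-n$. I will double-check the sign against the Morse convention $2n-\operatorname{ind}_H(p)$, which gives Morse degree minus $n$ equal to $n-\operatorname{ind}_H(p)$. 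I expect that with the paper's sign conventions these match, so that Floer degree $k$ corresponds to Morse degree $k+n$. Recapping by spheres shifts both sides by multiples of $2N_W$, which is absorbed by the $\bb Z_{2N_W}$ grading and the Novikov field $\Lambda$.

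\textbf{Step 2: differentials.} Next I identify the differentials. After a generic time-independent perturbation of $J$ (compatible with $\lambda$ on the end), I invoke Salamon--Zehnder / Hofer--Salamon \cite{hofer1995floer}. For $\eps$ small, every Floer trajectory of index one between constant orbits is $t$-independent, hence a negative gradient flow line of $\eps H$ for the metric $\om(\cdot,J\cdot)$. The maximum principle \cite{merry2019maximum} confines all Floer trajectories to a fixed compact set, so compactness is not affected by the non-compact end. Weak$^+$ monotonicity rules out sphere bubbling in the relevant moduli spaces. Then $CF_*(\eps H)\cong CM_{*+n}(\eps H)\otimes\Lambda$ as chain complexes, which gives the claim.

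\textbf{Main obstacle.} The hard step is the $t$-independence of index-one Floer cylinders: in general one needs either transversality of the autonomous $J$ with $S^1$-symmetric solutions, or a PSS argument. If the direct Salamon--Zehnder route is awkward with Novikov coefficients, I would instead build PSS maps $CM_{*+n}(\eps H;\Lambda)\to CF_*(\eps H)$ and back, using spiked half-cylinders. Energy and compactness of these moduli spaces are again controlled by the maximum principle on the end, and the compositions are chain homotopic to the identity by standard gluing.
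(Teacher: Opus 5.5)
Your route is the same as the paper's. For small $\eps$, all $1$-periodic orbits of $\eps H$ are the nondegenerate constant orbits at critical points of $H$ in $W$. The maximum principle confines Floer cylinders to a compact set, and the identification with Morse homology is the standard Hofer--Salamon result. So the only thing that actually has to be checked is the grading, and that is where your proposal goes wrong: you noticed the mismatch with the claimed shift but left it as ``I expect these match''.

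The fix is a sign. With $\dif H=i_{X^H}\om$ and $g=\om(\cdot,J\cdot)$ one has $\nabla H=JX^H$, so $X^{\eps H}=-J\nabla(\eps H)$. The linearized flow at a critical point $p$ is therefore $\exp(-tJ\,\eps\,\mathrm{Hess}_p)$, not $\exp(tJ\,\eps\,\mathrm{Hess}_p)$. Use the normalization $\mu_{\mathrm{CZ}}(\exp(tJ_0S))=\frac{1}{2}\mathrm{sign}(S)$ for symmetric $S$ with $\|S\|<2\pi$. This gives $\mu_{\mathrm{CZ}}(p)=-\frac{1}{2}\mathrm{sign}(\mathrm{Hess}_p)=\mathrm{ind}_H(p)-n$, hence $\deg(p)=n-\mathrm{ind}_H(p)=(2n-\mathrm{ind}_H(p))-n$, which is exactly the shift in the statement. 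Your formulas $\mu_{\mathrm{CZ}}(p)=n-\mathrm{ind}_H(p)$ and $\deg(p)=\mathrm{ind}_H(p)-n$ both have the wrong sign. As a sanity check in $\mathbb{R}^2$, $H=\frac{\eps}{2}|z|^2$ generates a clockwise rotation, so $\mu_{\mathrm{CZ}}=-1$ and the minimum has $\deg=1=n-0$.

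The same slip appears in Step 2. A $t$-independent solution of $\partial_s u+J(\partial_t u-X^{\eps H})=0$ satisfies $\partial_s u=JX^{\eps H}=\nabla(\eps H)$. So it is a \emph{positive} gradient line, which is exactly what the paper's $HM_*$ counts (flow lines of $\nabla H$, graded by $2n-\mathrm{ind}_H$); it is not a negative one.

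With your signs, Floer degree $k$ would be matched with critical points of Morse degree $n-k$ rather than $n+k$. In other words, you would be computing the index-graded Morse homology of $\eps H$, which gives $H_*(W,\Sigma^-)$ instead of $H_*(W,\Sigma^+)$. So the sign is not cosmetic for the lemma as stated, even though it would be harmless downstream in the main theorem because of Lefschetz duality.
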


\begin{proof}
    For sufficiently small $\eps$, the contact Hamiltonian $\eps h$ has no 1-periodic orbits, and the Hamiltonian $\eps H$ is $C^2$-small in $W$, which contains all the critical points of $H$. Therefore, all 1-periodic orbits of $\eps H$ are constant. Hence, the Floer homology $HF_*(\eps H) = HF_*(\eps h)$ is canonically isomorphic to the Morse homology. In our conventions, the grading of the Floer complex (given by the negative Conley--Zehnder index) matches the Morse degree $2n-\text{ind}_H$ translated by $-n$, i.e., $\deg = n-\text{ind}_H$. Note that the grading is taken modulo $2N_W$.
    See \cite[Sections 5 and 6]{hofer1995floer} and \cite{ono1995arnold} for details.
\end{proof}

The next lemma is used in the proof of the main theorem, Theorem~\ref{theorem:mainIntroduction}. It allows us to assume that the contact Hamiltonian of the given contact circle action is strict.

\begin{lemma}\label{lemma:strictHamiltonian}
    Let $(W,\om,\lambda)$ be a strong filling of a closed contact manifold $(\Sigma,\alpha)$, and let $g\colon\Sigma\to(\eta,+\infty)$ be a smooth function. Then 
    $$W_1=\widehat W\setminus\big\{(x,r)\in\Sigma\times(\eta,+\infty)\ \big|\ r>g(x)\big\}$$
    is a strong filling of $(\Sigma,g\alpha)$, where $\partial W_1$ is identified with $\Sigma$ through the map $x \mapsto (x, g(x))$. In addition, there is a symplectomorphism $\Phi\colon \widehat W\to \widehat W_1$ with $\Phi\rvert_{W_1} = \id$ such that $\Theta_1 = \Phi_* \circ \Theta$, where $\Theta_1$ is the connecting map associated with $W_1$.
\end{lemma}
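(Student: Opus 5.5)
Since $\lambda = r\alpha$ on $\Sigma\times(\eta,+\infty)$, the filling $W_1$ is just $\widehat W$ cut along the graph of $g$. The plan is to check the strong filling structure directly, then build $\Phi$ by stretching the cylindrical end, and finally compare the Hamiltonians defining $\Theta$ and $\Theta_1$.

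\emph{Strong filling.} The set $\Gamma_g=\{(x,g(x))\}$ is a hypersurface in the cylindrical part, and $W_1$ is the compact region bounded by it; compactness uses that $\Sigma$ is closed and $g>\eta$. Near $\Gamma_g$ we keep the form $\lambda=r\alpha$. Its Liouville field $Z=r\partial_r$ is transverse to $\Gamma_g$ and points outward. Under $j\colon x\mapsto(x,g(x))$ we get $j^*\lambda=g\alpha$, which is a contact form for $\xi$ with the correct orientation, since $g>0$. The Liouville flow gives a collar of $\partial W_1$. Concretely, the map $(x,\rho)\mapsto(x,\rho g(x))$ for $\rho$ near $1$ pulls $\lambda$ back to $\rho\, g\alpha$. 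This is exactly the normal form $\iota_1^*\lambda=\rho\,(g\alpha)$ required in Section~\ref{section:setup}.

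\emph{The map $\Phi$.} The completion $\widehat W_1$ glues $\Sigma\times(\eta_1,+\infty)$ with form $d(\rho g\alpha)$ onto this collar. Define $\Phi$ as the identity on $W_1$ and as $(x,r)\mapsto(x,\rho=r/g(x))$ on $\{r\geqslant g(x)\}$ and on a neighborhood of it. This is the same formula as the collar chart, so the two definitions agree on the overlap. The map is a diffeomorphism, since $r\mapsto r/g(x)$ is a bijection $[g(x),\infty)\to[1,\infty)$. It satisfies $\Phi^*(\rho g\alpha)=r\alpha$, so it is an exact symplectomorphism on the end.

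\emph{Naturality of $\Theta$.} Take a loop $\fii_t$ with Hamiltonian $h_t=-\alpha(Y_t)$ for $\alpha$. For $g\alpha$, the contact Hamiltonian is $h^1_t=g\,h_t$. Choose $H$ on $\widehat W$ extending $r h_t$ near infinity. On the region $\rho\geqslant 1$, the pushforward satisfies $H\circ\Phi^{-1}(x,\rho)=\rho\, g(x)h_t(x)=\rho\,h^1_t(x)$. Hence $H\circ\Phi^{-1}$ is an admissible extension of $h^1$ in the sense of Section~\ref{section:setup}, possibly after adjusting it on a compact set and shrinking the collar; this is harmless by the independence of extensions proved in Lemma~\ref{lemma:mapTheta}. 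Since symplectomorphisms conjugate Hamiltonian flows,
\[
\phi^{H\circ\Phi^{-1}}_1=\Phi\circ\phi^H_1\circ\Phi^{-1}.
\]
Therefore $\Theta_1([\fii_t])=\Phi_*\Theta([\fii_t])$, where $\Phi_*$ is conjugation on $\pi_0\Sympc$.

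The main obstacle is bookkeeping rather than substance. First, the extension must equal $\rho h^1$ on $\rho\geqslant 1-\eps$, which overlaps a small inner collar of $W_1$. Second, the region $\{r\leqslant g\}$ may dip into $W$ when $g<1$, so it is not literally inside the cylindrical part with $r\geqslant 1$. Both are handled in the same way. Choose $\eps$ small so that the collar $\{(1-\eps)g\leqslant r\}$ stays inside $\Sigma\times(\eta,\infty)$. Then choose the extension $H=\chi(r)rh_t$ with $\chi\equiv1$ there, using the freedom allowed by Lemma~\ref{lemma:mapTheta}.
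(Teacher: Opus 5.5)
Your proposal is correct and follows the paper's proof essentially step by step. It uses the same Liouville-field and pullback argument for the strong filling, the same collar $(x,\rho)\mapsto(x,\rho g(x))$ and stretching map $\Phi(x,r)=(x,r/g(x))$, and the same conjugation identity $\phi_1^{H\circ\Phi^{-1}}=\Phi\circ\phi_1^H\circ\Phi^{-1}$. Your bookkeeping choice of $\chi$ amounts to the paper's choice of $H_t=rh_t$ on $r\geqslant(1-\varepsilon_1)\min\{1,\min_\Sigma g\}$, which makes $H$ an admissible extension for $W$ and $H\circ\Phi^{-1}$ an admissible extension for $W_1$ simultaneously.
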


\begin{proof}
The Liouville vector field $Z=r\partial_r$ is transverse to $\partial W_1$ and points out of $W_1$. The map $x\mapsto (x, g(x))$ pulls back $\lambda=r\alpha$ to $g\alpha$, so $(W_1, \om)$ is a strong filling of $(\Sigma, g\alpha)$.

Set $\eta_1=\frac{\eta}{\min_\Sigma g}\in(0,1)$. The embedding
\[
    \iota_1\colon\Sigma\times(\eta_1,1]\hookrightarrow W_1,\qquad \iota_1(x,\bar r)=\big(x,\bar rg(x)\big),
\]
satisfies $\iota_1^*\lambda=\bar rg\alpha$, and we use it in the construction of $\widehat{W}_1$. Define $\Phi\colon\widehat{W}\to\widehat{W}_1$ by
\[
    \Phi|_{W_1}=\id,\qquad \Phi(x, r)=\bigg(x, \frac{r}{g(x)}\bigg)\text{ for }(x, r) \in \Sigma\times(\eta, +\infty), \text{ when } r>\eta_1 g(x).
\]
The two formulas agree on the image of $\iota_1$, so $\Phi$ is well defined and smooth. It is a symplectomorphism since $\Phi^*(\bar rg\alpha)=r\alpha$.

Let $[\fii_t]\in\pi_1\Cont(\Sigma,\ker \alpha)$, and let $h_t$ be its contact Hamiltonian with respect to $\alpha$. Its contact Hamiltonian with respect to $g\alpha$ is $gh_t$. Fix a small $\eps_1>0$ and choose $H_t$ such that $H_t(x,r)=rh_t(x)$ for $r\geqslant(1-\eps_1)\min\{1, \min_\Sigma g\}$. Then $H_t\circ\Phi^{-1}(x,\bar r)=\bar rg(x)h_t(x)$ for $\bar r\geqslant 1-\eps_1.$
Hence, $\Theta_1([\fii_t]) = [\phi_1^{H\circ \Phi^{-1}}] = [\Phi\circ\phi_1^{H}\circ\Phi^{-1}] =\Phi_*\Theta([\fii_t])$.
\end{proof}

The next lemma allows us to pass from Morse homology to singular homology via the isomorphism between the corresponding homology groups.

\begin{lemma} \label{lemma:MorseGluing}
     Let $W$ be a strong filling of a contact manifold $(\Sigma, \alpha)$. Let $h\colon \Sigma \to \bb{R}$ be a strict contact Hamiltonian that generates a free contact circle action $\fii_t$. Let $H\colon \widehat{W}\to \bb{R}$ be a Morse function that for $(x,r)\in \Sigma\times [1,+\infty)$ satisfies $H(x,r)=rh(x)$. Then $HM_*(H; \Lambda) \cong H_*(W, \Sigma^+; \Lambda)$, where $\Sigma^+=\{x\in\Sigma\mid h(x)>0\}$.
\end{lemma}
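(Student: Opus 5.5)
We aim to identify the Morse complex of $H$ on the non-compact manifold $\widehat W$ with a Morse complex computing relative homology of a compact manifold with boundary. We use the convention that $H$ has gradient $\nabla H = JX^H$. On the end $\Sigma\times[1,+\infty)$ the function is $H=rh$, and $h$ is strict, so $\fii_t$ preserves $\alpha$. Write $R$ for the Reeb field of $\alpha$. Since $h$ generates a circle action by strict contactomorphisms, $\dif h(R)=0$. The critical points of $rh$ on $\Sigma\times[1,\infty)$ are the points where $h=0$ and $\dif h|_\xi=0$. Such a point $x$ satisfies $\dif h_x=0$, so $Y(x)=-h(x)R+(\text{terms in }\dif h)=0$. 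This would make $x$ a fixed point of the action, contradicting freeness. Hence all critical points lie in the interior of $W$, and the Morse complex is generated by finitely many points.

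Next we compute the gradient in the end. With a cylindrical metric, i.e.\ $J$ compatible with $\lambda$ as in Section~\ref{section:FloerTheory}, we have $\dif r(\nabla H)=\dif r(JX^H)=-\lambda(X^H)$. The Hamiltonian vector field of $rh$ projects to $Y$, so $\lambda(X^H)=r\alpha(Y)=-rh$. Therefore $\dif r(\nabla H)=rh$. In degree-counting terms, the negative gradient flow (or positive, depending on the index convention $2n-\mathrm{ind}$) moves $r$ outward where $h<0$ and inward where $h>0$. With the degree $2n-\mathrm{ind}_H(p)$, $HM_*(H)$ is the Morse homology of $-H$ with the usual index convention, where $\mathrm{ind}_{-H}=2n-\mathrm{ind}_H$. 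For $-H$, the negative gradient $\nabla H$ points out of $W$ along $\{r=1\}$ exactly on $\Sigma^+$ and into $W$ on $\Sigma^-=\{h<0\}$.

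We then localize. Compactness of moduli spaces of gradient lines holds because the lines are confined: $r$ is monotone along the relevant pieces. Since $\dif(rh)(\nabla H)=|\nabla H|^2>0$ and critical values are attained in $W$, flow lines between critical points stay in a compact set. I would show that they stay in $W$, perturbing $W$ slightly to $\{r\le 1\}$ if needed. The standard result on Morse homology of a manifold with boundary, where the negative gradient points outward on a region $\Sigma^+$ and inward on its complement, then identifies the homology with $H_*(W,\Sigma^+;\Lambda)$.

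The main obstacle is that the boundary region $\{h=0\}$ is tangent: there the gradient is tangent to $\partial W$. We therefore cannot directly apply the textbook case of a boundary split into exit and entrance regions separated by a transverse locus. I would first handle this by an explicit smoothing. Replace $W$ by the compact domain $W'=\{r\le 1\}\cup\{(x,r): 1\le r\le 1+\delta\,\rho(h(x))\}$ for a suitable cutoff $\rho$, chosen so that $\nabla H$ is transverse to $\partial W'$ except along a codimension-one corner set. This makes $(W', \text{exit set})$ an index pair in the sense of Conley. Then $HM_*(H)\cong H_*(W',\text{exit set})$. Finally, $(W',\text{exit set})$ deformation retracts to $(W,\Sigma^+)$, using the flow of $r\partial_r$ and the fact that $\{h\ge 0\}$ retracts onto a collar-neighbourhood deformation of $\Sigma^+$, with $h$ having $0$ as a regular value after a small perturbation if needed. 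Coefficients in $\Lambda$ pose no issue, since the Morse differential involves no spheres, so $\Lambda$ is just a field extension of $\bb{Z}_2$.
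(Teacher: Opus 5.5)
Your route differs from the paper's. The paper takes the regularity of $0$ for $h$ from \cite[Lemma~3.5]{drobnjak2022exotic}, then imports the gluing construction and dimension count of \cite[Proposition~4.2]{drobnjak2022exotic}, changing only the coefficients and the collar. You instead build an index pair for the flow of $\nabla H$ and invoke the theorem identifying Morse homology of an isolated invariant set with the homology of its Conley index; state that theorem precisely, since it replaces the whole gluing argument. This strategy can work. Your first paragraph correctly shows that $h$ and $dh$ never vanish simultaneously, i.e.\ that $0$ is a regular value of $h$.

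The gap is the isolation step on which everything rests, and your argument for it fails. Monotonicity of $H$ confines connecting orbits to $\{c_-\le H\le c_+\}$, but on the end this set is $\{c_-\le rh\le c_+\}$. It contains $\{h=0\}\times[1,+\infty)$ and is not compact. That $r$ is monotone where $h$ has a fixed sign tells you nothing unless you know $h$ cannot pass from positive to negative along a flow line. For the same reason, the transversality you claim for $\partial W'$ cannot be checked: you never compute the $\Sigma$-component of $\nabla H$.

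The missing computation is as follows. For strict $h$, $X^H$ is the horizontal lift of $Y=-hR+Y_\xi$, where $Y_\xi\in\xi$ satisfies $i_{Y_\xi}d\alpha=dh$. A $\lambda$-compatible $J$ preserves $\xi$ and sends $r\partial_r$ to $R$. Hence on $\Sigma\times[1,+\infty)$
\[
\nabla H = rh\,\partial_r + JY_\xi, \qquad dh(JY_\xi)=d\alpha(Y_\xi,JY_\xi)\geq 0,
\]
with strict inequality on $\{h=0\}$, because $Y\neq 0$ there by freeness. So $h$ is non-decreasing along flow lines in the end. This has three consequences.
\begin{itemize}
\item A flow line can leave $W$ only through $\{h\geq 0\}$. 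It then enters $\{r\geq 1,\ h>0\}$, where $h\geq c>0$ from then on and $\dot r\geq cr$. So it escapes to infinity, and no connecting orbit leaves $W$.
\item At a point of $\{r=1,\ h=0\}$ one has $\dot r=0$ and $\ddot r=d\alpha(Y_\xi,JY_\xi)>0$, so the tangency is external. No full orbit in $W$ meets $\partial W$.
\item The pair $(W,\{r=1,\ h\geq 0\})$ is therefore already an index pair. The tangency you single out as the main obstacle is harmless, and the smoothing $W'$ is unnecessary.
\end{itemize}
Finally, $H_*(W,\{h\geq 0\};\Lambda)\cong H_*(W,\Sigma^+;\Lambda)$ because $0$ is a regular value, which you have already proved. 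Drop ``after a small perturbation if needed'': $h$ is fixed by the action and itself defines $\Sigma^+$.
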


\begin{proof}
Since $h$ is a strict contact Hamiltonian generating a free contact
circle action, $0$ is a regular value of $h$ by
\cite[Lemma~3.5]{drobnjak2022exotic}.

The proof follows that of
\cite[Proposition~4.2, Steps~2--7]{drobnjak2022exotic} with two minor
modifications. First, the coefficient field $\bb{Z}_2$ is replaced by
the Novikov field $\Lambda$. The proof uses only that the coefficient
ring is a field (through dimension computations in Steps~5 and 6). Second, our symplectic completion is constructed using the collar $\Sigma\times(\eta,+\infty)$ instead of $\Sigma\times(0,+\infty)$. The gluing construction in Step~3 is modified only by replacing the
collar $\Sigma\times(0,R]$ with $\Sigma\times(\eta,R]$, and the connecting cylinder $\Sigma\times\bb{R}$ with $\Sigma\times \big( \log\frac{\eta}{R}, -\log\frac{\eta}{R} \big)$ Since all identifications are defined in the same way, no further changes are required, and the arguments in Steps~4--7 remain valid.
\end{proof}

In the next lemma, we see that the notion of topological symmetry (see Definition~\ref{definition:topSymmetry}) with coefficients in a field of characteristic 2 is essentially the same as that with $\bb{Z}_2$ coefficients.

\begin{lemma} \label{lemma:symmetry}
    Denote by $\Lambda$ the Novikov field genereted over $\bb{Z}_2$. 
    Let $X$ be a topological pair with finitely generated integral homology, and let $\Lambda$ be any field of characteristic 2. If $H_n(X; \Lambda) \cong \Lambda^d$, for some $n,d\in\bb{N}_0$, then $H_n(X; \bb{Z}_2) \cong \bb{Z}_2^d$.
\end{lemma}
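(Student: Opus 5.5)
We use the universal coefficient theorem for fields, together with the fact that $\Lambda$ is a field extension of $\bb{Z}_2$. Since $\Lambda$ has characteristic $2$, it contains $\bb{Z}_2$ as its prime subfield, and so $\Lambda$ is a (possibly infinite-dimensional) vector space over $\bb{Z}_2$. In particular $\Lambda$ is flat, indeed free, as a $\bb{Z}_2$-module. The key identity to establish is
$$H_n(X;\Lambda)\cong H_n(X;\bb{Z}_2)\otimes_{\bb{Z}_2}\Lambda .$$
From this identity the statement follows at once. The singular chain complex satisfies $C_*(X;\Lambda)=C_*(X;\bb{Z}_2)\otimes_{\bb{Z}_2}\Lambda$, and since $-\otimes_{\bb{Z}_2}\Lambda$ is exact it commutes with taking homology. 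This is the universal coefficient theorem over the field $\bb{Z}_2$, where the $\mathrm{Tor}$ term vanishes. For a pair $X=(A,B)$ we apply the same argument to the relative chain complex $C_*(A;\bb{Z}_2)/C_*(B;\bb{Z}_2)$, which is a complex of free $\bb{Z}_2$-modules.

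Next we note that $H_n(X;\bb{Z}_2)$ is finite dimensional. By the integral universal coefficient theorem,
$$H_n(X;\bb{Z}_2)\cong \big(H_n(X;\bb{Z})\otimes\bb{Z}_2\big)\oplus \mathrm{Tor}\big(H_{n-1}(X;\bb{Z}),\bb{Z}_2\big),$$
and both summands are finite because the integral homology is finitely generated. Write $H_n(X;\bb{Z}_2)\cong\bb{Z}_2^{e}$ with $e<\infty$. Then $H_n(X;\Lambda)\cong \bb{Z}_2^{e}\otimes_{\bb{Z}_2}\Lambda\cong\Lambda^{e}$ as $\Lambda$-vector spaces. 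Since $\Lambda$ is a field, dimension over $\Lambda$ is well defined, so $e=d$, and hence $H_n(X;\bb{Z}_2)\cong\bb{Z}_2^d$.

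No step poses a serious obstacle. The only points needing care are the following. The tensor identification must be made as $\Lambda$-vector spaces and not merely as abelian groups, which holds because the isomorphism is $\Lambda$-linear by construction. The hypothesis of finitely generated integral homology is what guarantees $e<\infty$, so that the $\Lambda$-dimension comparison is meaningful. For the Novikov field in the text, the characteristic-$2$ property was already noted after its definition, so the lemma applies to it directly.
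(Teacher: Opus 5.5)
Your proof is correct, but it takes a different route from the paper's. The paper decomposes $H_n(X;\mathbb{Z})$ and $H_{n-1}(X;\mathbb{Z})$ by the structure theorem into a free part, $2$-primary cyclic summands and odd torsion. It then applies the integral universal coefficient theorem twice, once with $\Lambda$ and once with $\mathbb{Z}_2$ coefficients. In both cases each $\mathbb{Z}$ or $\mathbb{Z}_{2^k}$ summand of $H_n(X;\mathbb{Z})$, and each $\mathbb{Z}_{2^k}$ summand of $H_{n-1}(X;\mathbb{Z})$ (through $\mathrm{Tor}$), contributes exactly one copy of the coefficient field, while odd torsion contributes nothing. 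This gives the same count $d=a+\sum_k(a_k+b_k)$ on both sides.

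You instead factor the change of coefficients through $\mathbb{Z}_2$. You use $C_*(X;\Lambda)\cong C_*(X;\mathbb{Z}_2)\otimes_{\mathbb{Z}_2}\Lambda$ and the flatness of $\Lambda$ over the field $\mathbb{Z}_2$ to get a $\Lambda$-linear isomorphism $H_n(X;\Lambda)\cong H_n(X;\mathbb{Z}_2)\otimes_{\mathbb{Z}_2}\Lambda$. Extension of scalars then visibly preserves dimension.

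Your argument is shorter and avoids any bookkeeping of torsion. It is also more general: it does not need finite generation at all. A $\mathbb{Z}_2$-basis of $H_n(X;\mathbb{Z}_2)$ of any cardinality tensors to a $\Lambda$-basis of $H_n(X;\Lambda)$, and dimension over a field is well defined for arbitrary cardinals, so that cardinality must equal $d$. Your detour through the integral universal coefficient theorem to secure $e<\infty$ is therefore harmless but superfluous.

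The paper's route, in exchange, yields an explicit formula for $d$ in terms of integral invariants, and finite generation enters precisely through its cyclic decomposition. Your remark that the identification must be $\Lambda$-linear rather than merely additive is well taken. The paper leaves this implicit: it holds because the universal coefficient sequence with coefficients in $\Lambda$ is natural, hence a sequence of $\Lambda$-modules, which splits over the field $\Lambda$.
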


\begin{proof} 
    Write 
    $$H_n(X; \bb{Z}) \cong \bb{Z}^a \oplus\bigoplus\limits_{k \in \bb{N}} \bb{Z}_{2^k}^{a_k} \oplus T_1 \quad \text{and} \quad H_{n-1}(X; \bb{Z}) \cong \bb{Z}^b\oplus\bigoplus\limits_{k\in \bb{N}} \bb{Z}_{2^k}^{b_k} \oplus T_2,$$
    where by $T_1$ and $T_2$ we denote the torsion groups consisting of direct sums of cyclic groups of the form $\bb{Z}_{c}$, for odd integers $c\in\bb{N}$.
    We apply the universal coefficient theorem (see \cite[Theorem~3A.3]{hatcher2002algebraic}) with $\Lambda$ coefficients.    
    \begin{alignat*}{1}
        \Lambda^d \cong H_n(X;\Lambda) \cong &\left( H_n(X;\bb{Z}) \otimes \Lambda \right) \oplus \Tor\left(H_{n-1}(X;\bb{Z}), \Lambda\right) \\
        \cong &\Big(\Lambda^a \oplus \bigoplus\limits_{k\in \bb{N}} \Lambda^{a_k}\Big) \oplus \bigoplus\limits_{k\in\bb{N}} \Lambda^{b_k},
    \end{alignat*}
    which implies the equality 
    $$d = a+\sum\limits_{k\in\bb{N}} (a_k+b_k).$$
    The universal coefficient theorem applied again with the $\bb{Z}_2$ coefficients gives
    \begin{alignat*}{1}
        H_n(X;\bb{Z}_2) \cong &\left( H_n(X;\bb{Z}) \otimes \bb{Z}_2 \right) \oplus \Tor\left(H_{n-1}(X;\bb{Z}), \bb{Z}_2\right) \\
        \cong &\Big(\bb{Z}_2^a \oplus \bigoplus\limits_{k\in \bb{N}} \bb{Z}_2^{a_k}\Big) \oplus \bigoplus\limits_{k\in\bb{N}} \bb{Z}_2^{b_k} \cong \bb{Z}_2^d.
    \end{alignat*}
\end{proof}
\vspace{-3mm}

\begin{lemma} \label{lemma:FloerMorseSingular}
    Let $\fii_t\colon \Sigma \to \Sigma$ be a free contact circle action on a closed contact manifold $(\Sigma, \alpha)$ and let $h\colon \Sigma\to \bb{R}$ be its contact Hamiltonian. Assume that $h$ is strict with respect to $\alpha$. If $W$ is a weakly$^+$ monotone strong filling of $(\Sigma, \alpha)$, then for sufficiently small $\eps>0$, $$\dim_\Lambda HF_*(\eps h) = \dim H_{*+n}(W, \Sigma^+; \mathbb{Z}_2)  \ \text{and} \ \dim_\Lambda HF_*(-\eps h) = \dim H_{*+n}(W, \Sigma^-; \mathbb{Z}_2),$$
    where $\Sigma^\pm=\{x\in\Sigma \mid \pm h(x) > 0\}$.
\end{lemma}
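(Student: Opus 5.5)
The plan is to chain together Lemmas~\ref{lemma:FloerMorse}, \ref{lemma:MorseGluing} and \ref{lemma:symmetry}. The case of $-\eps h$ will follow from the case of $\eps h$ applied to the reversed action.

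First, I fix a Morse function $H\colon\widehat W\to\bb R$ with $H(x,r)=rh(x)$ on $\Sigma\times[1,+\infty)$. Such a function exists because $0$ is a regular value of $h$ (\cite[Lemma~3.5]{drobnjak2022exotic}). Consequently $rh$ has no critical points on the cylindrical end, since $\dif(rh)=h\,\dif r+r\,\dif h$ vanishes only where $h=0$ and $\dif h=0$. So one may extend $rh$ inward and perturb it to a Morse function inside $W$. The contact Hamiltonian $h$ has no constant periodic orbits, because the action is free and hence the contact vector field $Y$ is nowhere vanishing. Lemma~\ref{lemma:FloerMorse} then gives, for small $\eps>0$,
\[
HF_k(\eps h)\cong HM_{k+n}(\eps H;\Lambda).
\]
Morse homology of $\eps H$ coincides with that of $H$: the critical points are the same, the gradient flow lines are the same up to reparametrization, and the degrees are the same. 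Also, $\eps h$ is strict and generates the reparametrized action $\fii_{\eps t}$, whose set $\Sigma^+$ is unchanged. Lemma~\ref{lemma:MorseGluing} therefore yields
\[
HM_{*}(H;\Lambda)\cong H_{*}(W,\Sigma^+;\Lambda).
\]
This identification is made for the $\bb Z$-graded groups, and I then wrap the grading modulo $2N_W$ as in the Remark on notation.

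Next I convert the dimensions from $\Lambda$ to $\bb Z_2$. The pair $(W,\Sigma^+)$ has finitely generated integral homology: $W$ is a compact manifold, and $\Sigma^+$ is homotopy equivalent to the compact manifold with boundary $\{h\geqslant\delta\}$ for a small regular value $\delta>0$, again because $0$ is a regular value. Lemma~\ref{lemma:symmetry}, applied degreewise and summed over the integers $l\equiv k+n \pmod{2N_W}$, gives
\[
\dim_\Lambda H_{k+n}(W,\Sigma^+;\Lambda)=\dim H_{k+n}(W,\Sigma^+;\bb Z_2).
\]
Only finitely many degrees are nonzero, so the sum is finite. This proves the first equality.

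For the second equality, I observe that $-h$ is the strict contact Hamiltonian of the free circle action $\fii_{-t}$, and $\{-h>0\}=\Sigma^-$. The same argument applied to this action gives the claim with $\Sigma^-$.

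The main obstacle is checking that the hypotheses of Lemmas~\ref{lemma:FloerMorse} and \ref{lemma:MorseGluing} fit together for a single function. Lemma~\ref{lemma:FloerMorse} concerns $\eps H$, whose slope is $\eps h$, while Lemma~\ref{lemma:MorseGluing} needs a slope generating a free action. I handle this by applying Lemma~\ref{lemma:MorseGluing} directly to the strict Hamiltonian $\eps h$, which generates the free circle action $t\mapsto\fii_{\eps t}$ after reparametrizing the circle. I also use the fact that positive scaling of $H$ does not change Morse homology.
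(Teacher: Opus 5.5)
Your proposal is correct and is essentially the paper's argument: Lemma~\ref{lemma:FloerMorse}, then Lemma~\ref{lemma:MorseGluing} for $\pm H$, then Lemma~\ref{lemma:symmetry} to pass from $\Lambda$ to $\bb{Z}_2$. The paper also invokes Lemma~\ref{lemma:zigzag} to get independence of $\eps\in(0,1)$, which the statement does not need.

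Your extra checks fill in details the paper leaves implicit: existence of a Morse $H$ because $0$ is a regular value, and finite generation of $H_*(W,\Sigma^\pm;\bb{Z})$. Your closing remark that $\eps h$ generates $\fii_{\eps t}$ is superfluous, and that flow is not literally a $1$-periodic action; scaling invariance of Morse homology already lets you apply Lemma~\ref{lemma:MorseGluing} to $H$ itself.
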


\begin{proof}
    Let $H\colon \widehat W\to \bb{R}$ be a Morse function such that $H(x,r) = rh(x)$ for $(x,r)\in \Sigma\times[1,+\infty)$, and set $H_\eps = \eps H$. Since the action $\fii_t$ is free, none of the contact Hamiltonians along the path $\eps \mapsto \eps h$, for $\eps \in (0,1)$, admits 1-periodic orbits.\
    It follows from Lemma~\ref{lemma:zigzag} that the Floer homology groups $HF_*(\varepsilon h)$ are all isomorphic for $\varepsilon\in(0,1)$. The same argument applied to the path $\varepsilon\mapsto -\varepsilon h$ shows that the groups $HF_*(-\varepsilon h)$ are likewise all isomorphic for $\varepsilon\in(0,1)$.
    By Lemma~\ref{lemma:FloerMorse}, for sufficiently small $\eps>0$, we have isomorphisms 
    \begin{equation}
        HF_*(\pm \eps h) \cong HM_{*+n}(\pm H_\eps; \Lambda). \label{eq:FloerMorse}
    \end{equation}
    Moreover, Lemma~\ref{lemma:MorseGluing} provides us with isomorphisms
    \begin{equation} \label{eq:MorseSingular}
        HM_*(\pm H_\eps; \Lambda) \cong H_*(W, \Sigma^{\pm}; \Lambda).
    \end{equation}

Finally, the combination of equations $\eqref{eq:FloerMorse}$ and $\eqref{eq:MorseSingular}$, together with the result of Lemma~\ref{lemma:symmetry} gives us equalities
\begin{equation*}
    \begin{split}
        \dim_\Lambda HF_*(\pm \eps h) = \dim_\Lambda HM_{*+n}(\pm H_\eps; \Lambda) &= \dim_\Lambda H_{*+n}(W, \Sigma^{\pm}; \Lambda) \\ &= \dim H_{*+n}(W, \Sigma^{\pm}; \bb{Z}_2).
    \end{split}
\end{equation*}
\end{proof}
\vspace{-3mm}

Now we are ready to prove the main theorem, stated in the Introduction as Theorem~\ref{theorem:mainIntroduction}. We restate it here for the convenience of the reader.

\mainTheorem*

\begin{proof}
    Denote by $\alpha = \left.\lambda\right|_{\Sigma}$ the contact form on $\Sigma$ and let $h$ be the generating contact Hamiltonian of $\fii_t$, with respect to the form $\alpha$. 
    Take the contact form $\beta$ defined pointwise by averaging $\alpha$ over the action $\fii_t$, i.e.
    $$\beta_x = \int\limits_{S^1} \left(\fii_t^* \alpha \right)_x \dif t,$$
    for $x\in\Sigma$. It follows that the form $\beta$ is invariant under the action $\fii_t$. The forms $\alpha$ and $\beta$ generate the same contact distribution, which means that $\beta = f\alpha$ for a positive function $f$. Choose a constant $C>\frac{\eta}{\min_\Sigma f}$. By Lemma~\ref{lemma:strictHamiltonian}, with $g=Cf$, without loss of generality we can replace $\alpha$ by $C\beta$ and $W$ by $W_1$, i.e. we can assume that $h$ is a strict contact Hamiltonian. The manifolds $W$ and $W_1$ are both deformation retracts of $\widehat W$ (on which $c_1$ and $\om$ are defined) and therefore $W_1$ satisfies the same assumptions as $W$. Moreover, the set $\Sigma^+$ does not depend on the contact form, and $(W, \Sigma^+)$ and $(W_1, \Sigma^+)$ are diffeomorphic via the flow of the Liouville vector field. Therefore, $\fii_t$ is not topologically symmetric with respect to $W_1$.
    
    By the discussion in Section~\ref{section:setup}, the class $\Theta([\fii_t])$ is represented by the time-one map $\phi_1^H$ of the flow $\phi_t^H$ associated with a Hamiltonian $H\colon \widehat{W}\to \bb{R}$ that is equal to 
    $$H(x,r) = r\cdot h(x),\ \text{for} \ (x,r)\in \Sigma\times [1-\delta, +\infty),$$
    for some $\delta>0$. We choose $H$ to be a Morse function.
    
    Assume the contrary, that the class $\Theta([\fii_t])$ is trivial in $\pi_0\Sympc(\widehat W, \om)$. 
    Then there exists a compactly supported symplectic isotopy $\Psi$, 
    $$\Psi\colon [0,1] \to \Sympc(\widehat W, \om), \ \Psi_0 = \id, \ \Psi_1 = \phi_1^H.$$
    
    Since the first cohomology of $W$ vanishes and $\Sigma$ is connected, the symplectic isotopy $\Psi_t$ is a Hamiltonian isotopy. Let it be generated by a compactly supported Hamiltonian $G_t\colon \widehat{W}\to \bb{R}$. In particular, $\phi_0^G = \id$ and $\phi_1^G = \Psi_1 = \phi_1^H$. Moreover, $G_t(x,r) = 0$, for $(x,r)\in \Sigma\times [R,+\infty)$ and for some $R\geqslant 1$. Applying Lemma~\ref{lemma:strictHamiltonian} with $g\equiv R$, we may assume that $R=1$.
    
    The Hamiltonian $F\colon \widehat{W}\to \bb{R}$ with the flow $\phi_t^F = \phi_t^H \circ \left(\phi_t^G\right)^{-1}$ generates a loop of Hamiltonian diffeomorphisms and we use it to establish the naturality isomorphism $\mathcal N (F)$ described in Section~\ref{section:FloerTheory}. We have $F\rvert_{\Sigma\times [1,+\infty)} = H\rvert_{\Sigma\times [1,+\infty)}$. In particular, the slope of $F$ is $h$ and $\kappa^f = \kappa^h = 1$. 
    
    Let $\eps \in (0,1)$ be sufficiently small, so that Lemma~\ref{lemma:FloerMorseSingular} applies. The naturality isomorphism $\mathcal N(F)$ applied to the Hamiltonian $\eps H$ gives an isomorphism 
    \begin{equation} \label{eq:naturalityIso}
    HF_*(\eps h) \cong HF_{*+c}((\eps h - h)\circ \fii_t) \cong HF_{*+c}((\eps -1)h),\ \text{for some} \ c\in\bb{Z}_{2N_W},   
    \end{equation}
    since a strict autonomous contact Hamiltonian is invariant under its own flow.
    Moreover, since the contact action is free, the contact Hamiltonian $h$ has no closed orbits of negative period greater than $-1$ and Lemma~\ref{lemma:zigzag} can be applied to connect Hamiltonians with the slopes $(\eps-1)h$ and $-\eps h$. The isomorphism \eqref{eq:naturalityIso} can be rewritten as 
    \begin{equation} \label{eq:HFiso}
        HF_*(\eps h) \cong HF_{*+c}((\eps-1)h) \cong HF_{*+c}(-\eps h).
    \end{equation}
    By Lemma~\ref{lemma:FloerMorseSingular}, the isomorphism \eqref{eq:HFiso} reduces to the following equality between dimensions of singular homology groups $$\dim H_{*+n}(W, \Sigma^+; \bb{Z}_2) = \dim H_{*+c+n}(W, \Sigma^-; \bb{Z}_2).$$
    Using Lefschetz duality \cite[Theorem~3.43]{hatcher2002algebraic} and the universal coefficient theorem \cite[Theorem~3.2]{hatcher2002algebraic}, we obtain the following chain of equalities
    \begin{alignat*}{1}
        &\dim H_{*}(W, \Sigma^+; \bb{Z}_2) = \dim H_{*+c}(W, \Sigma^-; \bb{Z}_2) = \dim H^{2n-(*+c)}(W, \Sigma^+;\bb{Z}_2) \\
        =& \dim \Hom(H_{2n-c-*}(W, \Sigma^+;\bb{Z}_2), \bb{Z}_2) = \dim H_{2n-c-*}(W, \Sigma^+; \bb{Z}_2),
    \end{alignat*}
    making the action $\fii_t$ topologically symmetric (for $m=2n-c$), which is a contradiction. The naturality isomorphism $\mathcal N(F)$ is defined only if the orbits of $\fii_t$ are contractible in $W$ (see Section~\ref{section:FloerTheory}). Otherwise, for small $\eps$, the 1-periodic orbits of $\overline{F}\#(\eps H)$ are the loops $t\mapsto(\phi^F_t)^{-1}(p)$, where $p$ is a critical point of $H$, and none of them is contractible. Hence the Floer complex (which is generated by contactible orbits) is empty and $HF_*((\eps-1) h)\cong 0$. Similarly as above, one gets that $\fii_t$ is topologically symmetric. This proves that $\Theta([\fii_t])$ is non-trivial in $\smcg{\widehat W, \om}$. 
    
    If the contact Hamiltonian of $\fii_t$ with respect to $\alpha$ is strict, then $\Theta([\fii_t])$ is represented by a symplectomorphism that is supported in the interior of $W$ (see Section~\ref{section:setup}). Its class in $\smcg{W, \om}$ is non-trivial, since it is mapped to $\Theta([\fii_t])$ by the map induced by the inclusion $W\hookrightarrow \widehat W$.
\end{proof}

\begin{remark}
Note that the vanishing of the first cohomology is used here to pass from Hamiltonian to symplectic mapping classes. It is enough to assume that the appropriate flux homomorphism is trivial. This condition is satisfied, for example, in the atoroidal setting considered in \cite{cant2023extensible}.
\end{remark}


\section{Application to blow-ups of Brieskorn varieties} \label{section:examples}

In this section we apply Theorem~\ref{theorem:mainIntroduction} to a class of symplectic manifolds that are constructed as blow-ups of Brieskorn varieties. These manifolds provide examples of weakly$^+$ monotone symplectic fillings whose symplectic mapping class groups contain non-trivial elements. Brieskorn manifolds and varieties are well-established objects in contact and symplectic topology (see, for example, \cite{boyer2016brieskorn, kwon2016brieskorn,uebele2016symplectic}).

Let $n$ be a positive integer. For a vector of positive integers $a = (a_0$, $a_1$, \ldots, $a_n)$, define the Brieskorn variety
$$
V = V(a)=\left\{ z = (z_0,\ldots, z_n)\in\mathbb{C}^{n+1}\:   \big| \: z_0^{a_0} + \ldots + z_n^{a_n} =\varepsilon\:, \: \norm{z}\leqslant 1 \right\},
$$ where $\varepsilon>0$ is a sufficiently small number. This is a Liouville domain of dimension $2n$ with contact boundary $\partial V$, contactomorphic to the so-called Brieskorn manifold
$$
\Sigma = \Sigma(a)=\left\{ z = (z_0,\ldots, z_n)\in\mathbb{C}^{n+1}\:\big| \: z_0^{a_0} + \ldots + z_n^{a_n} =0\:, \: \norm{z}= 1 \right\}, 
$$
endowed with the contact form $\alpha_a = \frac{i}{4\pi L} \sum_{j=0}^{n} a_j (z_j\dif \bar{z}_j - \bar{z}_j \dif z_j)$, where $L$ is the least common multiple of $a_0, a_1, \ldots, a_n$. 
Consider the contact circle action $\fii^a_t$ on $\Sigma$ given by
\begin{equation} \label{eq:ReebBrieskorn}
    \fii_t^a(z_0, \ldots, z_n) = \left(e^{2\pi i\frac{L}{a_0} t}\cdot z_0, \ldots, e^{2\pi i\frac{L}{a_n} t}\cdot z_n \right).
\end{equation}
This is the Reeb flow of the contact form $\alpha_a$ on the Brieskorn manifold. It is 1-periodic and strict with respect to the form $\alpha_a$. For a suitable choice of the vector $a$, this action is free, which is described in the following lemma.

\begin{lemma} \label{lemma:freeReeb}
    Let $a=(a_0, a_1, \ldots, a_n)$ be a vector of positive integers and let $L$ be the least common multiple of $a_0, a_1, \ldots, a_n$. Let $\fii_t^a$ be the Reeb flow on the Brieskorn manifold $\Sigma(a)$. Then the $S^1$-action $\fii_t^a$ is free if and only if the numbers $\frac {L}{a_i}$ and $\frac{L}{a_j}$ are relatively prime, for all $0 \leqslant i, j \leqslant n$, $i\neq j$. 
\end{lemma}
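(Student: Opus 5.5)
The plan is to compute stabilizers directly. A point $z\in\Sigma(a)$ is fixed by $\fii^a_t$ exactly when $e^{2\pi i \frac{L}{a_j}t}z_j=z_j$ for every $j$. Set $I(z)=\{j \mid z_j\neq 0\}$ and $q_j=L/a_j$. The condition then says that $tq_j\in\bb Z$ for all $j\in I(z)$. For $t\in\bb R/\bb Z$ this is equivalent to $t\in\frac{1}{d_z}\bb Z/\bb Z$, where $d_z=\gcd\{q_j\mid j\in I(z)\}$. So the stabilizer of $z$ is $\bb Z_{d_z}$, and the action is free if and only if $d_z=1$ for every $z\in\Sigma(a)$.

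Next I would determine which index sets $I(z)$ actually occur. A single index is impossible: $z=(0,\dots,z_j,\dots,0)$ with $z_j\neq0$ would force $z_j^{a_j}=0$. Every set of two indices $\{i,j\}$ does occur, provided $n\geqslant1$. We choose $z_i,z_j\neq 0$ with $z_i^{a_i}=-z_j^{a_j}$, set all other coordinates to zero, and rescale to norm $1$. To stay on the variety when rescaling, use the weighted scaling $z_k\mapsto s^{1/a_k}z_k$ (equivalently the $\bb R_{>0}$ part of the weighted action), which preserves the equation $\sum z_k^{a_k}=0$ and changes the norm monotonically.

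Sufficiency follows directly. Suppose $\gcd(q_i,q_j)=1$ for all $i\neq j$. Every $z\in\Sigma$ has $|I(z)|\geqslant2$, so $d_z$ divides some $\gcd(q_i,q_j)=1$, and all stabilizers are trivial.

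Necessity also follows. Suppose $\gcd(q_i,q_j)=d>1$ for some $i\neq j$. The point $z$ with $I(z)=\{i,j\}$ constructed above has stabilizer $\bb Z_d\neq 0$, so the action is not free.

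The only mildly delicate step is confirming that every pair $\{i,j\}$ is realized by a point of $\Sigma$. The rescaling argument above handles it. One should also note the degenerate case $n=0$, where $\Sigma$ is empty; the action on the empty set is vacuously free, and the pairwise condition is vacuous as well.
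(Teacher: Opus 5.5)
Your proof is correct, and it takes a different route from the paper. The paper does not argue directly: it notes that freeness of the Reeb action is what the Sasakian literature calls regularity, and cites \cite[Proposition~9.3.22]{boyer2008sasakian} for the regularity criterion for Brieskorn manifolds. You instead prove that criterion from scratch, in three steps:
\begin{enumerate}
\item You identify the stabilizer of $z$ as $\mathbb{Z}_{d_z}$, with $d_z=\gcd\{L/a_j \mid z_j\neq 0\}$. The equivalence with $t\in\frac{1}{d_z}\mathbb{Z}/\mathbb{Z}$ is B\'ezout.
\item You rule out supports with fewer than two elements.
\item You realize every pair $\{i,j\}$ as a support. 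For this you use the weighted scaling $z_k\mapsto s^{1/a_k}z_k$, which preserves $\sum_k z_k^{a_k}=0$ and makes $\|z\|$ strictly increasing from $0$ to $\infty$.
\end{enumerate}
Your version has three advantages. It is self-contained. It works directly with the parametrization \eqref{eq:ReebBrieskorn} used in the paper, so nothing has to be matched against the conventions of the reference. It also gives more than the lemma states: the complete list of isotropy groups, and hence the exact locus where freeness fails. The citation buys brevity and places the lemma in its standard Sasakian context. One cosmetic point: in the necessity step, say that $\mathbb{Z}_d$ is nontrivial rather than writing $\mathbb{Z}_d\neq 0$.
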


\begin{proof}
In the Sasakian literature, a Sasakian structure is called regular
when the Reeb vector field generates a free $S^1$-action. For
Brieskorn manifolds, regularity is equivalent to the condition in the
statement of the lemma, see \cite[Proposition~9.3.22]{boyer2008sasakian}.
\end{proof}

Let $W=W(a)$ be a symplectic blow-up of $V$ at an interior point. Then $W$ is a strong filling of the contact manifold $(\Sigma, \alpha_a)$. The next theorem gives sufficient conditions on the vector $a$ for $W$ to admit an exotic symplectomorphism.

\begin{theorem} \label{theorem:example}
    Let $n\geqslant 3$ be an integer. Let $a=(a_0, a_1, \ldots, a_n)$ be a vector of positive integers, such that the numbers $\frac{L}{a_i}$ and $\frac{L}{a_j}$ are relatively prime for $0\leqslant i, j \leqslant n$, $i\neq j$, where $L$ is the least common multiple of $a_0, a_1, \ldots, a_n$. Denote $\mu = \prod_{j=0}^n (a_j-1)$. Then the manifold $W(a)$ admits an exotic symplectomorphism if any of the following conditions are met:
    \begin{equation} \label{eq:conditions}
        n \text{ is odd and } \mu\geqslant 1 \,\text{ OR }\, n \text{ is even and } \mu\geqslant 2.
    \end{equation}
\end{theorem}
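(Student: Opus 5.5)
The plan is to check that $W=W(a)$ satisfies every hypothesis of Theorem~\ref{theorem:mainIntroduction} for the Reeb action $\fii^a_t$. The only substantial part is showing that this action is not topologically symmetric, which needs the $\bb{Z}_2$-homology of $W$ together with its minimal Chern number.

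\textbf{Step 1: the action and $\Sigma^+$.} By the arithmetic assumption and Lemma~\ref{lemma:freeReeb}, $\fii^a_t$ is a free contact circle action. It is the Reeb flow of $\alpha_a$, so it is strict and its contact Hamiltonian is $h=-\alpha_a(R_{\alpha_a})\equiv-1$. Hence $\Sigma^+=\emptyset$, and topological symmetry reduces to a symmetry of the wrapped dimensions $d_k:=\dim H_k(W;\bb{Z}_2)$, $k\in\bb{Z}_{2N_W}$.

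\textbf{Step 2: topology of $W$.} By Milnor's theorem, $V$ is homotopy equivalent to a wedge of $\mu$ spheres $S^n$, and $\Sigma$ is $(n-2)$-connected. Since $n\geqslant 3$, $\Sigma$ is connected and $\pi_1(V)=\pi_2(V)=0$. Topologically, the blow-up $W$ is $V\#\overline{\bb{C}P^n}$. Therefore $H^1(W)=0$, and
\[
H_*(W;\bb{Z}_2)\cong H_*(V;\bb{Z}_2)\oplus\bigoplus_{j=1}^{n-1}\bb{Z}_2[2j],
\]
where the extra classes come from the exceptional divisor $E\cong\bb{C}P^{n-1}$. Also $\pi_2(W)\cong\bb{Z}$, generated by a line $\ell\subset E$.

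\textbf{Step 3: Chern data and monotonicity.} $V$ is a hypersurface defined by a single polynomial, so $c_1(TV)=0$. Along $E$ we have $TW|_E=TE\oplus\mathcal{O}(-1)$, so $c_1(\ell)=n-(-1)\cdot(-1)\cdots$; more concretely $c_1(\ell)=n-1$. Since $\om(\ell)>0$, $W$ is monotone with $\tau=\om(\ell)/(n-1)$, and in any case $N_W=n-1$. Thus $W$ is weakly$^+$ monotone, and the wrapping is modulo $2N_W=2n-2\geqslant4$. I expect this step to be the main obstacle: one has to justify $\pi_2(W)\cong\bb{Z}\langle\ell\rangle$ and the value $c_1(\ell)=n-1$ carefully, but it is standard.

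\textbf{Step 4: counting wrapped dimensions.} The degrees $0,2,\dots,2n-2$ each carry a $\bb{Z}_2$ from $H_0$ and the exceptional classes, while degree $n$ carries $\mu$. Since $2n-2\equiv0$, we get $d_0=2$, $d_{2j}=1$ for $1\leqslant j\leqslant n-2$, and an extra $\mu$ in class $n$.

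If $n$ is odd and $\mu\geqslant1$, the odd residues are supported only in class $n$. A symmetry $k\mapsto m-k$ must preserve parity, so $m$ is even, and it must fix class $n$, so $m\equiv 2n\equiv2$. It then sends class $0$ to class $2\not\equiv0$, but $d_0=2\neq1=d_2$, a contradiction.

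If $n$ is even and $\mu\geqslant2$, then $n\geqslant4$ and class $n$ is distinct from class $0$. The dimensions are $d_0=2$, $d_n=1+\mu\geqslant3$, and $1$ elsewhere. Any symmetry must fix both $0$ and $n$, which forces $m\equiv0$ and $m\equiv2n\equiv2$. This is impossible modulo $2n-2\geqslant6$.

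In both cases the action is not topologically symmetric. Since $h$ is strict, Theorem~\ref{theorem:mainIntroduction} yields an exotic symplectomorphism of $W(a)$.
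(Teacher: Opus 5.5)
Your proposal is correct and follows the same route as the paper: you verify the hypotheses of Theorem~\ref{theorem:mainIntroduction} for the Reeb action ($\Sigma^+=\emptyset$, $\Sigma$ connected, $H^1(W)=0$, $N_W=n-1$), then show that the $\mathbb{Z}_2$-homology of the blow-up, wrapped modulo $2n-2$, is not symmetric. Your explicit parity and fixed-class argument in Step~4, and the computation $c_1(\ell)=n+(-1)=n-1$ via $\pi_2(W)\cong\mathbb{Z}\langle\ell\rangle$ in Step~3, supply details that the paper leaves to its figure and to a bare assertion; you should delete the garbled first expression for $c_1(\ell)$ there.
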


\begin{proof}
    We want to apply Theorem~\ref{theorem:mainIntroduction} to the manifold $W=W(a)$. Consider the Reeb flow on $\Sigma=\Sigma(a)$, given by \eqref{eq:ReebBrieskorn}. This is a contact circle action, and by Lemma~\ref{lemma:freeReeb} it is free. The set $\Sigma^+$ is empty in this case, since the contact Hamiltonian of a Reeb flow is the constant function $-1$. By Milnor's theorem \cite{milnor1968singular}, the manifold $V$ has the homotopy type of a wedge of $\mu$ spheres $S^n$, where $\mu=\prod_{j=0}^n(a_j-1)$. In particular, since $n\geqslant 3$, we have $H^2(V;\mathbb{Z})=0$, and hence $c_1(V)=0$. Moreover, $V$ and $W$ are simply connected, so $H^1(W) = 0$. The minimal Chern number of $W$ is $N_W = n-1$, so $W$ is weakly$^+$ monotone (in fact, it turns out that $W$ is monotone as well).

    We shall now compute the homology of $W$, since the topological symmetry condition depends on the homology $H_*(W, \Sigma^+; \bb{Z}_2) \cong H_*(W; \bb{Z}_2)$. Blowing up at one point adds the reduced homology of the exceptional divisor $E\cong\mathbb{C}P^{n-1}$. Thus it changes the homology groups of $V$ in all even dimensions in the set $\{2,4,6,\ldots, 2n-2\}$, by adding an additional $\bb{Z}_2$ generator (of the exceptional divisor $\bb{C}P^{n-1}$) in each one. Therefore,
    $$
    H_k(W;\bb{Z}_2) \cong H_k(V; \bb{Z}_2) \oplus \widetilde{H}_k(\bb{C}P^{n-1};\bb{Z}_2) \cong 
    \begin{cases}
    \mathbb Z_2, & k\in \{0,2,4,\ldots,2n-2\}\setminus\{n\},\\
    \mathbb Z_2^{\mu+1}, & k=n,\; n\text{ even},\\
    \mathbb Z_2^\mu, & k=n,\; n\text{ odd},\\
    0, & \text{otherwise}.
    \end{cases}
    $$
    The homology wrapped modulo $2N_W = 2n-2$ is 
    $$
    H_k(W;\bb{Z}_2) \cong 
    \begin{cases}
    \mathbb{Z}_2^2, & k=0, \\
    \mathbb Z_2, & k\in \{2,4,\ldots,2n-4\}\setminus\{n\},\\
    0, & k\in \{1,3,\ldots,2n-3\}\setminus\{n\},\\
    \mathbb Z_2^{\mu+1}, & k=n,\; n\text{ even},\\
    \mathbb Z_2^\mu, & k=n,\; n\text{ odd}.
    \end{cases}
    $$

    \begin{figure}[H] 
    \centering
    
    \twoCircles
    
    \caption{Examples of sequences of dimensions, wrapped modulo $2n-2$, for $n=6$ (left) and $n=7$ (right). In both circles, the dashed line represents the axis of symmetry corresponding to the case $\mu=0$. In the left circle, the dotted line represents the axis of symmetry corresponding to the case $\mu=1$.}
    \label{figure:topologicalSymmetry}
    \end{figure}
    
    \FloatBarrier

    We can now see that if the condition \eqref{eq:conditions} is met, the action $\fii_t^a$ is not topologically symmetric.\ In these cases, since $\fii_t^a$ is strict, Theorem~\ref{theorem:mainIntroduction} implies that $\Theta([\fii_t^a])$ is represented by an exotic symplectomorphism of $W$. The illustration of the wrapped sequence of dimensions is presented for an even and odd $n$ in Figure~\ref{figure:topologicalSymmetry}.
\end{proof}

\begin{remark}
    One can obtain results analogous to Theorem~\ref{theorem:example} when
    $W$ is the symplectic blow-up of $V$ at several points rather than a
    single one. More precisely, for $m\geqslant 1$, let $W=W(a,m)$ denote
    the symplectic blow-up of $V(a)$ at $m$ generic interior points. Then
    the wrapped homology modulo $2N_W=2n-2$ is given by
    $$
    H_k(W;\bb{Z}_2) \cong 
    \begin{cases}
    \mathbb{Z}_2^{m+1}, & k=0, \\
    \mathbb Z_2^m, & k\in \{2,4,\ldots,2n-4\}\setminus\{n\},\\
    0, & k\in \{1,3,\ldots,2n-3\}\setminus\{n\},\\
    \mathbb Z_2^{\mu+m}, & k=n,\; n\text{ even},\\
    \mathbb Z_2^\mu, & k=n,\; n\text{ odd}.
    \end{cases}
    $$ 
    The resulting sequence of dimensions is not symmetric
    whenever condition \eqref{eq:conditions} is satisfied. Here is also $N_W=n-1$, so $W$ is weakly$^+$ monotone (although not necessarily monotone).
\end{remark}

\begin{example}
    The assumption on the weights is satisfied, for example, by $a=(d,d,\ldots,d)$ with $d\geqslant 2$, since then $L=d$ and $L/a_i=1$ for all $i$. Hence Theorem~\ref{theorem:example} applies to the blow-up of the Milnor fiber of $z_0^d+z_1^d+\ldots+z_n^d$ for every $n\geqslant 3$ whenever $d\geqslant 2$ if $n$ is odd, and $d\geqslant 3$ if $n$ is even.
\end{example}

\begin{remark}
None of the Brieskorn manifolds satisfying theconditions of Theorem~\ref{theorem:example} is a topological sphere. Brieskorn's criterion (see, for example, \cite[Proposition~3.6]{kwon2016brieskorn}) characterizes when $\Sigma(a)$ is homeomorphic to a sphere. The condition in this
criterion requires some $a_i$ to be coprime to all the other $a_j$. Under the conditions of Theorem~\ref{theorem:example} this is impossible (such $a_i \geqslant 2$ would divide $L/a_j$).
\end{remark}

\begin{remark}
Note that $\mu=1$ if and only if $a=(2,2,\ldots,2)$. In this case, the Brieskorn variety is the cotangent disc bundle of a standard sphere, $V=D^*S^n$. When $n$ is odd, Theorem~\ref{theorem:example} detects an exotic symplectomorphism on the blow-up of $V$. Without performing the blow-up, i.e., when working on $V$, our method does not yield an exotic symplectomorphism, since $H_*(V;\bb{Z}_2)$ is symmetric. Nevertheless, the symplectic mapping class group of $V$ is known to be non-trivial (in fact, the standard Dehn--Seidel twist along the zero-section has infinite order \cite{uljarevic2019viterbo}).
\end{remark}

\bibliography{references.bib} 
\bibliographystyle{alpha}

\end{document}